\theoremstyle{plain}
\newtheorem{theorem}{Theorem}[section]
\newtheorem{maintheorem}{Theorem}
\newtheorem{lemma}[theorem]{Lemma}
\newtheorem{proposition}[theorem]{Proposition}
\newtheorem{corollary}[theorem]{Corollary}
\theoremstyle{definition}
\theoremstyle{remark}
\newtheorem{remark}[theorem]{Remark}
\newcommand{\ols}[1]{\mskip.5\thinmuskip\overline{\mskip-.5\thinmuskip {#1} \mskip-.5\thinmuskip}\mskip.5\thinmuskip} 
\newcommand{\olsi}[1]{\,\overline{\!{#1}}} 
\newcommand\closure[1]{
  \tctestifnum{\count@stringtoks{#1}>1} 
  {\ols{#1}} 
  {\olsi{#1}} 
}
\long\def\count@stringtoks#1{\tc@earg\count@toks{\string#1}}
\long\def\count@toks#1{\the\numexpr-1\count@@toks#1.\tc@endcnt}
\long\def\count@@toks#1#2\tc@endcnt{+1\tc@ifempty{#2}{\relax}{\count@@toks#2\tc@endcnt}}
\def\tc@ifempty#1{\tc@testxifx{\expandafter\relax\detokenize{#1}\relax}}
\long\def\tc@earg#1#2{\expandafter#1\expandafter{#2}}
\long\def\tctestifnum#1{\tctestifcon{\ifnum#1\relax}}
\long\def\tctestifcon#1{#1\expandafter\tc@exfirst\else\expandafter\tc@exsecond\fi}
\long\def\tc@testxifx{\tc@earg\tctestifx}
\long\def\tctestifx#1{\tctestifcon{\ifx#1}}
\long\def\tc@exfirst#1#2{#1}
\long\def\tc@exsecond#1#2{#2}
\def\R{\ensuremath{\mathbb R}}
\def\N{\ensuremath{\mathbb N}}
\def\emb{\operatorname{Emb}}
\def\inc{\operatorname{inc}}
\def\leb{\ensuremath{m}}
\newcommand{\qand}{\quad\text{and}\quad}
\numberwithin{equation}{section}
\begin{document}

\title[]{Impulsive Lorenz semiflows: Physical measures, \\statistical stability and entropy stability}
\author[J. F. Alves]{Jos\'{e} F. Alves}
\address{Jos\'{e} F. Alves\\ Departamento de Matem\'{a}tica\\ Faculdade de Ci\^encias da Universidade do Porto\\ Rua do Campo Alegre 687\\ 4169-007 Porto\\ Portugal}
\email{jfalves@fc.up.pt} \urladdr{http://www.fc.up.pt/cmup/jfalves}

\author[W. Bahsoun]{Wael Bahsoun}
\address{Wael Bahsoun, Department of Mathematical Sciences, Loughborough University, Loughborough, Leicestershire, LE11 3TU, UK}
\email{W.Bahsoun@lboro.ac.uk}
 \urladdr{https://www.lboro.ac.uk/departments/maths/staff/wael-bahsoun}

\date{\today}
\thanks{J. F. Alves (JFA) was partially supported by CMUP (UID/MAT/00144/2019) and
PTDC/MAT-PUR/4048/2021, which are funded
by FCT (Portugal) with national (MEC) and European structural funds
through the program  FEDER, under the partnership agreement PT2020. W. Bahsoun (WB) is supported by EPSRC grant EP/V053493/1. This work was initiated during JFA's Institute of Advanced Studies (IAS Loughborough) Residential Fellowship in July 2023. JFA and WB would like to thank CUMP and the Department of Mathematical Sciences at Loughborough for their hospitality during the course of this work. Both authors would like to thank Alexey Korepanov and Marcelo Viana for useful discussions.}

\keywords{Impulsive Dynamical System, Physical Measure, Lorenz Flow}
\subjclass[2020]{37A05, 37A35, 37C10, 37C75, 37C83}

\begin{abstract}
We study semiflows generated via impulsive perturbations of Lorenz flows. We prove that such semiflows admit a finite number of physical measures. Moreover, if the impulsive perturbation is small enough, we show that the physical measures of the semiflows are close, in the weak* topology, to the unique physical measure of the Lorenz flow. A similar conclusion holds for the entropies associated with the physical measures.
\end{abstract}

\maketitle

\setcounter{tocdepth}{2}

\tableofcontents 

\section{Introduction}
Lorenz  \cite{L63} studied numerically the three-dimensional vector field defined by 
\begin{equation}\label{eq.lorenzeq}
\left\{\begin{array}{l}
\dot{x_1}=\sigma(x_2-x_1),\\
\dot{x_2}=rx_1-x_2-x_1x_3,\\
\dot{x_3}=x_1x_2-bx_3,
\end{array}
\right.
\end{equation}
when $\sigma=10$, $r=28$ and $b=8/3$, as a simplified model for atmospheric convection. By numerical calculations, Lorenz observed that the associated flow has a compact trapping region with a chaotic attractor. Later, a rigorous mathematical framework of similar flows, called geometric Lorenz flows, was introduced in \cite{ABS77, GW79}. In \cite{T99}, Tucker provided a computer-assisted proof that the classical Lorenz attractor is indeed a geometric Lorenz attractor.  In particular, it is a singular-hyperbolic attractor~\cite{MPP}, namely a nontrivial robustly transitive attracting invariant set containing a singularity (equilibrium point).  Moreover, it admits a unique physical measure; see for example~\cite{APPV09}. Recently, many authors studied the stability of such physical measure and other statistical properties under small \emph{smooth} perturbations of Lorenz flows \cite{AS14, BMR20, BR19}. It is know that invariant stable foliations of Lorenz flows persist under small smooth perturbations; see for instance \cite{Bort}. In particular, the associated Poincar\'e map inherits an invariant $C^{1+\alpha}$,  $\alpha>0$, stable foliation~\cite{AM17} and the Poincar\'e map can be represented by a two dimensional \emph{skew product}, where the base map is a one-dimensional piecewise uniformly expanding and the fibre maps are uniformly contracting. Such properties were exploited in   \cite{AS14, BMR20, BR19}.

Impulsive perturbations are very common in physical models. They capture phenomena characterised by sudden changes in the states of the system (e.g. when billiard balls collide, there is an impulse exchanged between them, resulting in abrupt changes in their velocities and directions).~In this work, we are interested in studying impulsive perturbations of Lorenz flows, where the resulting perturbations generate discontinuous impulsive semiflows (see Subsection \ref{subec:impluse} for a precise definition of an impulsive semiflow). Our motivation is twofold, on the one hand is to investigate stability of statistical properties for Lorenz flows under discontinuous perturbations, and on the other hand to investigate smooth ergodic theoretic properties in the context of impulsive dynamical systems; in particular, existence and stability of \emph{physical measures}. The latter systems have been mainly studied from a qualitative point of view: existence and uniqueness of solutions, sufficient conditions to ensure a complete characterization and some asymptotic stability of the limit sets \cite{B07,BF07,BF08,BS19,C04a,C04b,K94a}, with few exceptions on basic ergodic theoretic results \cite{ABJ22, ACS17, ACV15}. 

Our work demonstrates how introducing a small impulsive perturbation to Lorenz flows can pose non-obvious problems, requiring techniques different from those traditionally employed in classical scenarios.  At the technical level, the main difficultly in our study is that the perturbed semiflow is not continuous. Hence, in general, it is not expected that it admits a smooth invariant foliation. Consequently, to prove existence of physical measures for the semiflow through a corresponding Poincar\'e map, one cannot use the standard technique of quotienting along stable leaves, since the Poincar\'e map is \emph{not} necessarily a skew product. We believe that the current work will stimulate further research on statistical properties for impulsive semiflows.

 In the rest of this introduction, we recall the definition of a physical measure and the notion of an impulsive dynamical system. Statements of our main results (Theorem \ref{th.main} and Theorem \ref{th.stability}) on existence and stability of physical measures and the corresponding entropies for impulsive Lorenz semiflows are found in Subsection \ref{se.Lorenz}.
 
 \subsection{Physical measures and $u$-Gibbs measures}
 Let $M$ be a   finite dimensional compact Riemannian manifold (possibly with  boundary) and   $m$ be the Lebesgue (volume) measure   on the Borel sets of $M$. We say that $X: \R^+_0 \times M\to M$
 is a   \emph{semiflow}  if, for all $x\in M$ and    $s,t\in \R^+_0$, we have
\begin{enumerate}
\item $X_0(x)= x$,
\item $X_{t+s}(x)=X_t(X_s(x))$.
\end{enumerate}
The \emph{trajectory} of  $x\in M$ is the curve defined  by $X_t(x)$, for $t\ge0$.
The semiflow is called a \emph{flow} when we have $\R$ playing the role of $\R^+_0$, which means that we can consider the past of the trajectories.

Given a semiflow $X$ on $M$, we define the \emph{basin} $\mathcal B_\nu$ of  a  probability measure $\nu$ on the Borel sets of $M$ as the  set of points 
 $x \in M$ such that, for any continuous 
  $\varphi:M \to \mathbb R$, we have
\begin{equation}\label{eq.basina}
    \lim_{T\rightarrow\infty}\frac{1}{T}\int_{0}^{T}\varphi(X_t(x))dt= \int\varphi\, d\nu .
\end{equation}
We say that  $\nu$ is a \emph{physical measure} for $X$ if $m(\mathcal B_\nu)>0$.   
Since measures defined on the Borel sets of a compact metric space are determined by their integrals on continuous functions, it follows that distinct physical measures must have disjoint basins. 

Physical measures for semiflows are frequently obtained considering discrete-time dynamical systems associated with these flows, e.g. via Poincar\'e return maps. In the discrete-time case of a dynamical system defined by a map $f:M\to M$, we define the basin $\mathcal B_\mu$ of a  probability measure $\mu$ on the Borel sets of $M$ as the set of points $x \in M$ such that, for any continuous 
  $\varphi:M \to \mathbb R$, we have
\begin{equation}\label{eq.basinb}
    \lim_{n\rightarrow\infty}\frac{1}{n}\sum_{j=0}^{n-1}\varphi(f^j(x))dt= \int\varphi\, d\mu .
\end{equation}
We say that  $\mu$ is a \emph{physical measure} for $f$ if $m(\mathcal B_\mu)>0$. 
Note that a physical measure    is  necessarily   \emph{$f$-invariant}. 

A  probability measure $\mu$ on the Borel sets of $M$ is called a \emph{$u$-Gibbs measure} (also known as an \emph{SRB measure}) for a map ${f:M\to M}$  if \emph{i)} $\mu$ is $f$-invariant, \emph{ii)} $f$ has positive Lyapunov exponents $\mu$ almost everywhere, and \emph{iii)} the conditional measures of $\mu$ on local unstable manifolds are absolutely continuous with respect to the Lebesgue measures on these manifolds. 
A result due to Pesin~\cite{P76}
establishes  that if $f$ is a    $C^{1+\alpha}$  diffeomorphism     whose Lyapunov exponents are all nonzero with respect to an ergodic invariant probability measure $\mu$, then the \emph{stable holonomy} is absolutely continuous.
Using   Birkhoff Ergodic
Theorem, it can be proved   that
the basin of an ergodic probability measure~$\mu$ in a compact Riemannian manifold~$M$ contains $\mu$ almost every point in~$M$; see e.g. \cite[Proposition~2.12]{A20}.
Since  time averages with respect to continuous functions are constant on stable manifolds, it follows that every ergodic $u$-Gibbs probability measure for a $C^{1+\alpha}$ diffeomorphism with non-zero Lyapunov exponents $\mu$ almost everywhere  is a physical measure. 
These conclusions, relatively well known for diffeomorphisms of class $C^{1+\alpha}$, are also valid for the class of piecewise $C^{1+\alpha}$ diffeomorphims considered in \cite{S92,S92a} which we will address in this work; see also \cite[Part I \& Part II]{KSL86}.

The existence of $u$-Gibbs measure for piecewise piecewise hyperbolic diffeomorphisms has been deduced in several situations under additional assumptions:
\begin{itemize}
\item bounded derivative \cite{DL08, C99b,Y85,Y98};
\item  conservative setting (billiards) \cite{CZ05,CZ05a,Y98};
\item invariant stable foliation \cite{APPV09, GL20};
\item stable cone field transverse to the singularity set \cite{DZ14};
\end{itemize}
None of these assumptions are a priori verified for the Poincar\'e maps we consider. The most suitable work for our setting is that of Pesin \cite{P92} and Satayev \cite{S92,S92a} on   piecewise hyperbolic  diffeomorphisms.

\subsection{Impulsive semiflows}\label{subec:impluse} 
Consider  a semiflow or a flow $X$   on $M$ and   $\Sigma\subset M$  a region such that
$$\inf\left\{t> 0:X_t(x)\in \Sigma\right\}>0,\quad \forall x\in M.$$
Given a map 
 $\varphi: \Sigma\to M$   and $x\in M$, we define the \emph{impulsive trajectory} ${\gamma_x:[0,T(x))\to M}$, for some $T(x)\in\R^+ \cup\{\infty\}$,
  and  the  \emph{impulsive times} $0=\tau_0(x)<\tau_1(x)<\tau_2(x)<\cdots$  (a~finite or  infinite number of times)  using the following inductive procedure:
\begin{enumerate}
\item Set  $$
\tau_1(x)=
\begin{dcases}
\inf\left\{t> 0:X_t(x)\in \Sigma\right\} ,& \text{if } X_t(x)\in \Sigma\text{ for some }t>0;\\
+\infty, & \text{otherwise;}
\end{dcases}
$$
and   for $0\le t<\tau_{1}(x)$
$$\gamma_x(t)=X_t(x).$$
 \begin{figure}[h]
 \begin{center}
 \includegraphics[width=8cm]{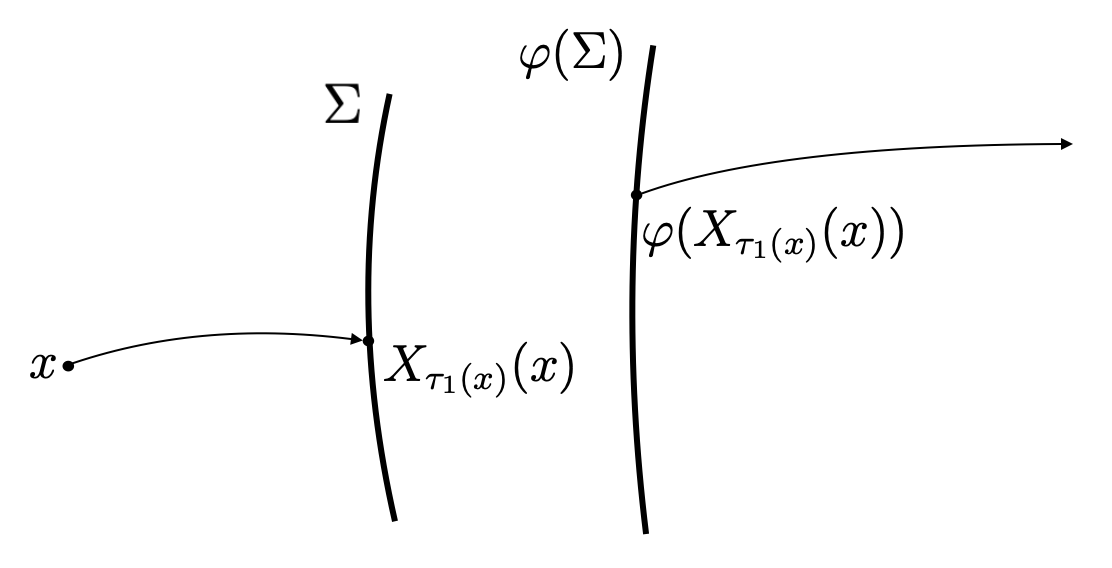}
 \caption{Impulsive trajectory}
 \end{center}
 \end{figure}

%
\item Assume  that  $\tau_n(x) $ and $\gamma_x(t)$ have been  defined   for    $0\le t< \tau_{n}(x)$ and $n\ge 1$. \\
If $\tau_{n}(x)=\infty$,  then set $T(x)=\infty$  and stop the process.\\
If $\tau_{n}(x)<\infty$, set
 $$\quad\quad\gamma_x(\tau_{n}(x))=\varphi(X_{\tau_n(x)-\tau_{n-1}(x)}(\gamma_x({\tau_{n-1}(x)}))),  $$
$$
\tau_{n+1}(x)=\tau_n(x)+\tau_1(\gamma_x(\tau_n(x)))
$$
and  for $\tau_n(x)<t<\tau_{n+1}(x)$
$$\gamma_x(t)=X_{t-\tau_n(x)}(\gamma_x(\tau_n(x))).$$
This completes the inductive procedure.
      \end{enumerate}
      
In  case $\tau_n(x)<\infty$ for all $n\ge1$, set  
 $$T(x)=\sup_{n\ge 1}\,\{\tau_n(x)\}.
 $$
In general  we can have  $T(x)=+\infty$ or $T(x)<+\infty$. However,    assuming that $\varphi(\Sigma)\cap\Sigma=\emptyset$, it follows that   $T(x)=+\infty$ for all $x\in M$; see \cite[Remark 1.1]{AC14}.
%
We say that $(M,X,\Sigma,\varphi)$ is an \emph{impulsive dynamical system} if
 $$\tau_1(x)>0\quad\text{and}\quad T(x)=+\infty, \quad\text{for all $x\in M$}.$$
 We call  $\Sigma$ the \emph{impulsive region} and $\varphi$ the \emph{impulsive map} of the impulsive dynamical system. 
The  \emph{impulsive semiflow} $Y:\R^+_0 \times M\to M$ of the impulsive dynamical system $(M,X, \Sigma, \varphi)$ is given by
$$
Y (t,x) = \gamma_x(t),\quad \forall (x,t)\in\R^+_0\times M,
         $$
where $\gamma_x(t)$ is the impulsive trajectory of $x$ determined by $(M,X,\Sigma, \varphi)$.
It follows from  \cite[Proposition 2.1]{B07} that $Y$ is indeed a semiflow, though not necessarily continuous nor a flow, even if $X$ is a flow. In fact, through the points of $\varphi(\Sigma)$ more than one trajectory can pass, which means that we cannot consider the past of these trajectories. A different type of impulsive perturbations was considered in \cite{GV20}.

\subsection{Lorenz flows}\label{se.Lorenz} 
Our main results will be obtained not only for the  flow $X$ associated with the vector field in~\eqref{eq.lorenzeq}, but more generally for the broader class of geometric Lorenz flows introduced in \cite{ABS77, GW79}. We will refer to all these flows as \emph{Lorenz flows}.
Many features of the flow $X$ in the trapping region $M$ have been  proven  over the last decades by studying   the Poincar\'e return map to the plane $x_3=r-1$; see  \cite{ABS77,ABS82}.
As noted in \cite[Section~2.2]{S92} and \cite[Section 2]{S92a} this  return map  fits in a certain set of conditions (L1)-(L4) that we describe in Subsection~\ref{Lorenzmaps}. In particular, using an appropriate coordinate system,  the Poincar\'e return map  is given by  a transformation  $F:\Sigma\to \Sigma$, where
 $$\Sigma=\{(x_1,x_2, x_3)\in\R^3: |x_1|\le 1,\,|x_2|\le 1, x_3=r-1\},$$
 and has a discontinuity line $\Gamma\subset \Sigma$ given by  $x_1=0$, dividing  $\Sigma$  into the two subdomains  
such that  $F$ is smooth in both  subdomains. Strictly speaking, the transformation is not defined on the set $\Gamma$, but this is completely irrelevant for our purposes. It is well known that the return time $R:\Sigma\to \R^+$ at a point $x=(x_0,x_1, x_3)\in\Sigma$ satisfies
\begin{equation}\label{eq.are}
R(x) \approx |\log |x_0||.
\end{equation}
The return map $F$ has been exhaustively studied by several authors from various perspectives. In particular, it is well known that it admits a stable invariant foliation by nearly vertical lines, which makes it possible to obtain some of its properties studying the one-dimensional transformation in the quotient space  by the stable leaves. This strategy was particularly successful in obtaining a $u$-Gibbs measure $\mu$ for $F$, which incidentally proves to be the unique physical measure for $F$, with its basin covering Lebesgue almost all of~$\Sigma$.  In the case of geometric Lorenz flows, the Poincar\'e return map  $F$ and the return time $R$ enjoy these same properties by construction.

In this work we consider impulsive semiflows associated with Lorenz flows, having an impulsive map $\varphi:\Sigma\to M$
close to  the inclusion map, with the impulsive region $\Sigma$ being the domain of a Poincar\'e return map as above. Allowing the impulsive map $\varphi$ to send points \emph{above}  $\Sigma$, it  is relatively easy to think of examples for which the impulsive flow becomes essentially trivial. Indeed, consider a small $t_0>0$ and the impulsive map $\varphi:\Sigma\to M$ defined for each $x\in\Sigma$ by 
$\varphi(x)=X_{-t_0}(x).$
Since the trajectories under a Lorenz flow of Lebesgue almost all points in $M$ hit the region $\Sigma$, it follows  that the trajectories of Lebesgue  almost all  points in~$M$ enter a region where the flow is made up of periodic trajectories.
We therefore consider the impulsive maps having  images in a   flow box \emph{below}~$\Sigma$, defined for some small $t_0>0$ as
$$B^+(\Sigma)=\{ X_t(z): 0< t\le t_0,\; z\in\Sigma\}.$$
Let the inclusion map of $\Sigma$ in $M$ be denoted by $\inc_\Sigma$,  and    the space of $C^1$ embeddings of $\Sigma$ in $M$ be denoted by $\emb^1(\Sigma,M)$. Our first main result is the following theorem:


\begin{maintheorem}\label{th.main}
Let $X$ be a Lorenz flow and $(M,X,\Sigma,\varphi)$ be an impulsive dynamical system such that 
  $\varphi(\Sigma)\subset B^+(\Sigma).$
If  $\varphi$ is a $C^2$ map sufficiently close to   $\inc_\Sigma$ in $ \emb^1(\Sigma,M)$, then the semiflow associated with $(M,X,\Sigma,\varphi)$ has a  finite number of physical measures whose basins cover Lebesgue almost all of $M$.
\end{maintheorem}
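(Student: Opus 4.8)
The plan is to reduce the study of the impulsive semiflow $Y$ to that of a Poincaré-type first-return map to the impulsive region $\Sigma$ (more precisely, to the section $\varphi(\Sigma)\subset B^+(\Sigma)$), and then to produce finitely many $u$-Gibbs/physical measures for that return map via the Pesin--Sataev theory of piecewise $C^{1+\alpha}$ hyperbolic diffeomorphisms, before finally pushing these measures back up to physical measures of $Y$. First I would set up the return map: since $\varphi(\Sigma)\cap\Sigma=\emptyset$ and $\varphi(\Sigma)$ lies in the flow box $B^+(\Sigma)$ just below $\Sigma$, an orbit of $Y$ starting on $\varphi(\Sigma)$ flows upward, crosses $\Sigma$ after a time comparable to $t_0$, is instantaneously mapped by $\varphi$ back into $B^+(\Sigma)$, and repeats; this defines $\widehat F=\varphi\circ F\circ(\text{flow to }\Sigma)$ on $\varphi(\Sigma)$, conjugate to $G:=\varphi\circ F:\Sigma\to\Sigma$ acting on $\Sigma$. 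The return time is $\widehat R(x)\approx R(x)+t_0 \approx|\log|x_1||$, so it is integrable with respect to any measure comparable to the known SRB measure of $F$; this integrability is exactly what is needed to lift invariant measures of the return map to invariant measures of the semiflow and to transfer basins.

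The core analytic step is to verify that $G=\varphi\circ F$ satisfies conditions (L1)--(L4) (or the Pesin--Sataev axioms from \cite{P92,S92,S92a}) when $\varphi$ is $C^2$-embedded and $C^1$-close to $\inc_\Sigma$. Here I would argue by persistence of the hyperbolic structure: the unperturbed $F$ has a dominated splitting on $\Sigma\setminus\Gamma$ with strong vertical contraction and horizontal expansion, $F$ is piecewise $C^{1+\alpha}$, and the derivative blows up in a controlled ($\approx 1/|x_1|$) way near the discontinuity line $\Gamma$; since $D\varphi$ is uniformly close to the identity, $DG=D\varphi\cdot DF$ still expands a horizontal cone field and contracts a vertical one, the singularity set of $G$ is $\Gamma$ together with $\varphi(\Gamma)$'s preimage (still a finite union of smooth curves), and the derivative estimates near singularities survive with slightly worse constants. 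The key point, emphasized in the introduction, is that we do \emph{not} claim $G$ is a skew product or has an invariant stable foliation — only that it is a piecewise $C^{1+\alpha}$ diffeomorphism satisfying the Pesin--Sataev hypotheses, which yield a \emph{finite} number of ergodic $u$-Gibbs measures $\mu_1,\dots,\mu_k$ whose basins cover Lebesgue-a.e.\ point of $\Sigma$.

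From the finitely many ergodic $u$-Gibbs measures $\mu_i$ of $G$ I would build the physical measures of $Y$ by the standard saturation construction: set $\nu_i = c_i\int_\Sigma\int_0^{\widehat R(x)}\delta_{Y_t(\varphi(x))}\,dt\,d\mu_i(x)$, normalized by $c_i=1/\int\widehat R\,d\mu_i$ (finite by the logarithmic bound \eqref{eq.are}); each $\nu_i$ is $Y$-invariant. To see these are physical measures with basins covering Lebesgue-a.e.\ of $M$, I would use that Lebesgue-a.e.\ point of $M$ flows into $B^+(\Sigma)$ in bounded time (the trapping-region property of the Lorenz flow, unaffected by the impulses since $B^+(\Sigma)$ is crossed transversally), combine the Birkhoff averages of $G$ on $\Sigma$ with the time reparametrization, and invoke absolute continuity of the holonomy along stable manifolds for the piecewise $C^{1+\alpha}$ class (as recalled in the excerpt, following \cite{S92,S92a,KSL86}) to conclude that the set of points equidistributing to $\nu_i$ has positive Lebesgue measure and that these basins together are conull.

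The main obstacle I expect is precisely the verification that $\varphi\circ F$ still obeys the Pesin--Sataev axioms despite the loss of the skew-product structure: one must control the geometry of the singularity set of $G$ (that the curves $\varphi$ introduces are transverse enough to the contracting cone, so that "singularities are not too tangent to unstable directions," a hypothesis like the alignment conditions in \cite{S92}) and re-derive the quantitative bounds on $\|DG^{-1}\|$ and on the distortion near $\Gamma$ uniformly in the perturbation. A secondary but real subtlety is that $Y$ is genuinely discontinuous on $\varphi(\Sigma)$, so care is needed to ensure the return map $\widehat F$ is well-defined and piecewise smooth on a full-measure subset and that the set of points whose $Y$-orbit meets $\varphi(\Sigma)$ infinitely often has full Lebesgue measure in $M$ — this uses $\varphi(\Sigma)\cap\Sigma=\emptyset$ together with $T(x)\equiv+\infty$, already guaranteed in the setup.
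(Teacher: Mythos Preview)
Your approach is essentially the paper's: reduce to a Poincar\'e return map on $\Sigma'=\varphi(\Sigma)$, conjugate it to a map on (a subset of) $\Sigma$, verify the Sataev conditions (L1)--(L4), invoke Theorem~\ref{th.finite} to obtain finitely many ergodic $u$-Gibbs measures, check integrability of the return time, and lift via the suspension formula to physical measures of $Y$ whose basins cover Lebesgue almost every point of $M$.

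Two small corrections are worth making. First, $\varphi\circ F$ is not a self-map of $\Sigma$: its image lies in $\varphi(\Sigma)\subset B^+(\Sigma)$. The correct conjugate of $\widehat F$ on $\Sigma$ is $\tilde F_Y=\psi\circ\varphi\circ F$, where $\psi:\Sigma'\to\Sigma$ is the backward-flow projection $x\mapsto X_{t^-(x)}(x)$; this is exactly the paper's formula~\eqref{eq.hadi}. Second, because $\psi$ and $\varphi$ are diffeomorphisms, the singularity set of $\tilde F_Y$ is precisely $\Gamma$ --- no additional singular curves are introduced by $\varphi$ --- so the ``main obstacle'' you anticipate (controlling the geometry and cone-alignment of extra singularity curves) does not arise, and the verification of (L1)--(L4) reduces to a short perturbation argument from the corresponding properties of $F$ (this is the content of Lemma~\ref{le.L1to5}).
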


An interesting open problem is to investigate if the impulsive semiflow has a unique physical measure.~Note that the uniqueness of the physical measure for Lorenz flows results from the uniqueness of the measure for the quotient transformation (because of transitivity), an instrument that we cannot use in our case.~Uniqueness for the impulsive Lorenz semiflow will naturally follow from the uniqueness of the physical measures given by Corollary~\ref{co.finite2} for the Poincar\'e return map, something that is not guaranteed by the results in~\cite{S92,S92a}.

A question that is of particular interest is whether the Lorenz flow is stable under small impulsive perturbations.
We say that the Lorenz flow $X$ is \emph{statistically stable by impulsive perturbations} on $\Sigma$ if, for any sequence $(\varphi_n)_n$ converging to $\inc_\Sigma$ in the $C^0$ topology and any sequence $(\nu_n)_n$, where  $\nu_n$  is a  physical measure for the impulsive flow of $(M,X,\Sigma,\varphi_n)$, 
we have $\nu_n\to \nu$ in the weak* topology, as $n\to\infty$, where $\nu$ is the physical measure for~$X$. We say that $X$ is \emph{entropy stable by impulsive perturbations} on $\Sigma$ if $h_{\nu_n}(Y_n)\to h_{\nu}(X)$, as $n\to\infty$, where each $Y_n$ is the impulsive semiflow of $(M,X,\Sigma,\varphi_n)$ and $h_{(\cdot)}(\cdot)$ denotes the respective metric entropy. Our second main result is the following theorem:

\begin{maintheorem}\label{th.stability} 
 Lorenz flows are statistically stable and entropy stable by impulsive perturbations on $\Sigma$.
\end{maintheorem}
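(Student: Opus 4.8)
The plan is to reduce both statements to analogous statistical and entropy stability results for the family of Poincaré return maps, and then transfer those to the semiflows via the standard suspension/return-time machinery. More precisely, for each impulsive map $\varphi_n$ close to $\inc_\Sigma$ one obtains a perturbed Poincaré return map $F_n:\Sigma\to\Sigma$ (together with a return time $R_n$): a point $x\in\Sigma$ flows under $X$, hits $\Sigma$ at some first time, and the impulsive rule applies $\varphi_n$; composing the flow from $\varphi_n(\Sigma)\subset B^+(\Sigma)$ back up to $\Sigma$ gives $F_n$, which is a $C^2$-small perturbation of the unperturbed $F$ in the sense that $F_n\to F$ in the relevant (piecewise) $C^1$ topology and the return times converge, $R_n\to R$, uniformly away from the discontinuity line $\Gamma$, with the logarithmic bound \eqref{eq.are} preserved uniformly in $n$. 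The first step, then, is to make this correspondence precise and record the uniform estimates; this is where the geometry of $B^+(\Sigma)$ being a genuine flow box below $\Sigma$ is used, so that the unperturbed foliation structure degrades only slightly and the hypotheses (L1)--(L4) of Satayev's framework hold uniformly for the whole family $\{F_n\}$.

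Next I would establish statistical stability at the level of the return maps: if $\mu_n$ is a physical ($u$-Gibbs) measure for $F_n$ and $\mu_n\to\mu_\infty$ weakly (along a subsequence, using compactness of the space of probability measures on the compact $\Sigma$), then $\mu_\infty$ is an $F$-invariant measure, and one must show $\mu_\infty$ is the unique physical measure $\mu$ of the unperturbed $F$. The natural route is an entropy/upper-semicontinuity argument combined with the absolute continuity of the conditional measures on unstable leaves: uniform hyperbolicity (uniform expansion in the base direction, uniform contraction transversally, with the constants independent of $n$ by the previous step) gives uniform lower bounds on the unstable Jacobians and hence a uniform lower bound on the densities of the conditional measures; passing to the limit, $\mu_\infty$ inherits absolutely continuous conditional measures on unstable manifolds and positive Lyapunov exponents, so it is $u$-Gibbs for $F$, hence (by transitivity of the Lorenz return map and uniqueness of its $u$-Gibbs/physical measure) equal to $\mu$. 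For entropy stability of the return maps I would use the Pesin entropy formula $h_{\mu_n}(F_n)=\int \log\lvert\det DF_n|_{E^u}\rvert\, d\mu_n$, valid by the Pesin/Satayev theory invoked in the excerpt, together with the uniform convergence $\log\lvert\det DF_n|E^u\rvert\to\log\lvert\det DF|E^u\rvert$ (again away from $\Gamma$, controlled using \eqref{eq.are}-type integrability) and the weak* convergence $\mu_n\to\mu$, taking care that mass does not escape to the discontinuity set — here the uniform bound $R_n(x)\approx\lvert\log\lvert x_0\rvert\rvert$ yields a uniform integrable domination that legitimizes the limit.

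Finally I would pass from the return maps to the semiflows. Since $Y_n$ is (on a full-measure set) the suspension of $F_n$ over the roof function $R_n$, its physical measures $\nu_n$ are exactly the normalized suspensions of the $\mu_n$: for continuous $\psi:M\to\R$, $\int\psi\, d\nu_n=\frac{1}{\int R_n\, d\mu_n}\int_\Sigma\!\int_0^{R_n(x)}\psi(X_t(\varphi_n\text{-orbit of }x))\,dt\, d\mu_n(x)$, and Abramov's formula gives $h_{\nu_n}(Y_n)=h_{\mu_n}(F_n)/\int R_n\, d\mu_n$. Then $\mu_n\to\mu$, $R_n\to R$ (with uniform integrable control from \eqref{eq.are}), and $h_{\mu_n}(F_n)\to h_\mu(F)$ from the previous paragraph yield $\nu_n\to\nu$ weakly and $h_{\nu_n}(Y_n)\to h_{\nu}(X)=h_\mu(F)/\int R\, d\mu$, which is exactly statistical and entropy stability. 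The main obstacle I anticipate is the convergence step in the absence of an invariant stable foliation for $F_n$: one cannot quotient to a one-dimensional map, so the identification of the weak* limit of the $\mu_n$ as the unique $u$-Gibbs measure of $F$ must be carried out directly at the two-dimensional level, propagating absolute continuity of unstable conditionals through the limit with constants uniform in $n$ and simultaneously ruling out loss of mass near the discontinuity line $\Gamma$ using the logarithmic return-time estimate; making these two controls compatible and uniform is the technical heart of the argument.
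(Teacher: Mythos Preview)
Your plan is correct and matches the paper's architecture: reduce to the Poincar\'e return maps, establish statistical and entropy stability there (the latter via the Pesin entropy formula and convergence of unstable log-Jacobians, with integrability controlled by the logarithmic return-time bound), and lift to the semiflows via the suspension construction and Abramov's formula.

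The one place where the paper is more economical than your outline is the identification of the weak$^*$ limit of the $\mu_n$: rather than proving directly that the limit measure inherits absolutely continuous unstable conditionals (which is what you propose), the paper verifies a list of stability conditions (S1)--(S5) for the family $(\tilde F_n)_n$ and $F$ and then invokes Sataev's ready-made stability theorem (\cite[Theorem~7.2]{S92}), which asserts precisely that any weak$^*$ accumulation point of $u$-Gibbs measures for $\tilde F_n$ is a $u$-Gibbs measure for $F$; uniqueness then forces convergence to $\mu$. Your direct argument is essentially the content of Sataev's theorem, so it is not wrong, just more work. The paper also packages the ``no mass escapes to $\Gamma$'' issue into a single auxiliary lemma exploiting the uniform upper and lower bounds on the conditional densities on local unstable manifolds, which it then reuses for both the convergence $\int R\,d\tilde\mu_n\to\int R\,d\mu$ and the convergence of the suspended integrals; your proposal identifies the same mechanism but spreads it across the argument.
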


\section{Piecewise hyperbolic diffeomorphisms} \label{se.phm}
This section is devoted to the presentation of some results in \cite{S92,S92a} for piecewise hyperbolic diffeomorphisms that will play an important role in this work. 
Let $\Sigma$ be a compact  finite dimensional  Riemannian manifold, possibly with a boundary. Denote by~$\rho$ the distance in $\Sigma$ and by $m$ the Lebesgue measure on $\Sigma$. We say that  $f:\Sigma\rightarrow\Sigma$ 
is  a \emph{piecewise   diffeomorphism} if there exists a finite number of  pairwise disjoint open regions $D_1,\dots,D_q$ such that  $\Sigma=\closure{\cup_{i=1}^q {{D}}_i}$
and  $f\vert_{\cup_i D_i}$ is an injective  differentiable  map. 
Set 
$$D=\cup_{i=1}^q D_i\qand \Gamma=\Sigma\setminus D.$$
We shall refer to $\Gamma$ as the \textit{singularity set} of $f$ and assume $m(\Gamma)=0$.
The derivative of $f$ at a  point $x\in D$ will be denoted by  $d_xf: T_x\Sigma\to  T_{f(x)}\Sigma$.

\subsection{Physical and $u$-Gibbs measures} 
The results on the existence of $u$-Gibbs measures  for piecewise differentiable  diffeomorphisms are obtained  in \cite{S92,S92a}  under some hyperbolicity assumptions that will be stated below. First of all, we  assume the existence of an \emph{unstable confield} $(\mathcal K_x^u)_{x\in D}$ and a \emph{stable conefield} $(\mathcal K_x^s)_{x\in D}$; see \cite[Section~1.3]{S92} for precise definitions. We say that a submanifold $V\subset D$ is a \emph{u-disk} (resp.  \emph{s-dis}k) of size $r>0$ if it is a ball of radius $r$ in its intrinsic metric and the tangent space $T_x V$ is contained in $K_x^u$ (resp. $K_x^s$) for every $x\in V$. The Lebesgue measure on a $u$-disk   $V$ will be denoted by $m_V$.

\begin{enumerate}

\item[(H1)] There are constants $A,\alpha>0$ such that, for any $x\in D$,
$$\|d_xf\|\le A \rho(x,\Gamma)^{-\alpha}\qand \|d^2_xf\|\le A \rho(x,\Gamma)^{-\alpha}.
$$

\item[(H2)] There are conefields $(\mathcal K_x^u)_{x\in D}$ and $(\mathcal K_x^s)_{x\in D}$ such that, for any $x\in D$,
$$d_xf(\mathcal K_x^u)\subset \mathcal K_{f(x)}^u\qand (d_xf)^{-1}(\mathcal K_{f(x)}^s)\subset \mathcal K_{x}^s.
$$
Moreover, there exists $\lambda>1$ such that for any vectors $v\in \mathcal K_{x}^u$ and $w\in \mathcal K_{f(x)}^s$
$$
\|d_xf(v)\|\ge\lambda \|v\|\qand \|(d_xf)^{-1}(w)\|\ge\lambda \|w\|.
$$

\item[(H3)] There are constants $B,\beta >0$ such that, for any $n\in\N$ and $\varepsilon>0$,
$$m(f^{-n}(\Gamma_\varepsilon))<B\varepsilon^\beta.
$$

\item[(H4)] There is $\varepsilon_0>0$ such that, for any $u$-disk $V$ there are   $n_0=n_0(V)$ and $B_0=B_0(V)$ such that for any $\varepsilon\in(0,\varepsilon_0)$,
$$m_V(V\cap f^{-n}(\Gamma_\varepsilon))<\varepsilon^\beta m_V(V),\quad\forall n\ge n_0,
$$
$$
m_V(V\cap f^{-n}(\Gamma_\varepsilon))<B_0\varepsilon^\beta m_V(V),\quad\forall n\in\N.
$$

\item[(H5)] 
The partition $D_1,\dots,D_q$ is \emph{generating}:
for any $\varepsilon>0$, there is $n=n(\varepsilon)$ such that for any $j_{-n},\dots,j_n\in\{1,\dots,q\}$ the diameter of $\cap_{i=-n}^{n} f^{i}(D_{j_i})$ is smaller than~$\varepsilon$.

 \item[(H6)] There exist $\lambda>1$ and  $r_0>0$ such that
 \begin{enumerate}
\item for any points $x,y$ in the same $s$-disk of size $r_0$ contained in $f(D_j)$
 $$\rho(f^{-1}(x),f^{-1}(y))>\lambda \rho(x,y).$$
 \item for any points $x,y$ in the same $u$-disk of size $r_0$ 
 $$\rho(f(x),f(y))>\lambda \rho(x,y).$$
\end{enumerate}

 \item[(H7)] The constants $n_0(V)$ and $B_0(V)$ depend only on the size of $V$.

\end{enumerate}

Conditions (H1)-(H5) are enough  for ensuring the existence of $u$-Gibbs measures, the remaining  (H6)-(H7) will be used to deduce the continuity of the $u$-Gibbs measures in the weak* topology under some  additional assumptions to be presented in  Subsection~\ref{se.stat}. 

\begin{theorem}\label{th.finite}
If  $f$ satisfies (H1)-(H4), then there is a finite number of physical   measures $\mu^1,\dots, \mu^s$  for $f$  with $\leb(\Sigma\setminus (\mathcal B_{\mu^1}\cup\cdots \mathcal B_{\mu^s}))=0$ such that, for each $1\le i \le s$, we have
\begin{enumerate}
\item  $\mu^i$ is an ergodic $u$-Gibbs measure;
\item the entropy formula holds for   $\mu^i$;
\item there exists $A_i$ with $\mu^i(A_i)=1$ such that Lebesgue almost every point in $\mathcal B_{\mu^i}$ belongs in the stable manifold of some point in   $A_i$;
\item the densities   of the conditionals of   $\mu^i$ with respect to the Lebesgue measure on local unstable manifolds are  bounded from above and below by uniform constants.
\end{enumerate}
\end{theorem}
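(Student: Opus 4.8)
The plan is to obtain Theorem~\ref{th.finite} as a reorganisation of the construction of $u$-Gibbs measures for piecewise hyperbolic maps due to Sataev~\cite{S92,S92a}, building on Pesin~\cite{P92}, split into the five assertions we need. The first step is to check that hypotheses (H1)--(H4) are exactly the hyperbolicity and recurrence conditions under which that construction runs: (H1) bounds the blow-up of $\|d_xf\|$ and $\|d^2_xf\|$ near $\Gamma$, (H2) supplies an invariant cone-field with a uniform expansion/contraction rate $\lambda>1$, and (H3)--(H4) give the polynomial-in-$\varepsilon$ upper bounds for, respectively, the ambient measure and the $m_V$-measure on a $u$-disk $V$ of the set of points whose $n$-th iterate enters $\Gamma_\varepsilon$. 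The remaining conditions (H6)--(H7) are not used here; they serve only to make the uniform constants below independent of the base point, which is what later yields weak* continuity.

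Next I would recall the construction itself. Fix a $u$-disk $V$ and set $\mu_n=\frac1n\sum_{j=0}^{n-1}f^j_*(m_V/m_V(V))$; any weak* accumulation point $\mu$ is $f$-invariant. The substance is that $\mu$ is a probability measure, i.e.\ no mass escapes towards the singularity set, which is where (H3)--(H4) enter together with a union bound over the times an orbit visits $\Gamma_\varepsilon$; that $\mu$ has only positive Lyapunov exponents, which follows from the cone-field in (H2); and that the conditional measures of $\mu$ on local unstable manifolds are absolutely continuous, with densities squeezed between two positive constants, which is assertion~(4). The density bounds come from a bounded-distortion estimate for the restrictions of $f^n$ to pieces of $u$-disks, and (H1) together with the uniform recurrence bound in (H4) is precisely what makes that estimate uniform. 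In particular $\mu$ is a $u$-Gibbs measure.

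Then, by the ergodic decomposition theorem, $\mu$-almost every ergodic component is again a $u$-Gibbs measure. Each ergodic $u$-Gibbs measure $\mu^i$ is a physical measure: by the Birkhoff ergodic theorem its basin has full $\mu^i$-measure, and since the time average of a continuous function is constant along stable manifolds and the stable holonomy of a piecewise $C^{1+\alpha}$ hyperbolic map is absolutely continuous (Pesin~\cite{P76}; in the piecewise setting \cite{S92,S92a}), $m$-almost every point of $\mathcal B_{\mu^i}$ lies on the stable manifold of a point of a set $A_i$ with $\mu^i(A_i)=1$, which is assertion~(3); in particular distinct ergodic $u$-Gibbs measures have disjoint basins. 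Finiteness then follows from assertion~(4): saturating a $u$-disk of definite size inside the support of $\mu^i$ by local stable manifolds and invoking the absolute continuity of the stable holonomy shows $m(\mathcal B_{\mu^i})\ge\delta$ for a $\delta>0$ that does not depend on $i$, so there are at most $m(\Sigma)/\delta$ of them; call them $\mu^1,\dots,\mu^s$.

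It remains to see that the basins cover $m$-almost all of $\Sigma$ and to record the entropy formula. For the former, one shows that on every $u$-disk $V$ the empirical averages $\frac1n\sum_{j=0}^{n-1}\delta_{f^j(x)}$ converge to some $\mu^i$ for $m_V$-almost every $x$ (every weak* limit is a $u$-Gibbs measure, hence a combination of the $\mu^i$, and the stable-manifold description of the basins pins down a single one for $m_V$-a.e.\ $x$), and then transfers this to $m$-almost every point of $\Sigma$ by a Fubini argument across the family of backward-iterated $u$-disks through $m$-a.e.\ point, again using the absolute continuity of the stable holonomy and the two-sided density bounds to keep the argument quantitative. Assertion~(2), the entropy formula $h_{\mu^i}(f)=\int\sum_{\lambda_j(x)>0}\lambda_j(x)\,d\mu^i(x)$, is the Pesin/Ledrappier--Young formula for $u$-Gibbs measures: (H1) provides the $C^{1+\alpha}$-type regularity, the Margulis--Ruelle inequality gives the upper bound, and the absolute continuity of the unstable conditionals gives the matching lower bound; this too is carried out in \cite{S92,S92a}. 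The main obstacle throughout is the singularity set: every distortion and no-escape estimate has to survive the unbounded derivatives near $\Gamma$, which is exactly why (H3)--(H4) are phrased as polynomial recurrence bounds, and the one genuinely global difficulty is that, because there is no honest invariant unstable foliation, the last step cannot be carried out by quotienting and must be done by a direct Fubini-type argument.
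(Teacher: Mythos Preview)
Your proposal is correct and is essentially a fleshed-out sketch of the Sataev/Pesin construction that the paper simply cites: the paper does not reprove the theorem but attributes items (1)--(3) and the finiteness/covering statement directly to \cite[Theorems~2 and~3]{P92} (countably many measures) and \cite[Theorems~5.14 and~5.15]{S92} (reduction to finitely many), and then spends its effort only on item~(4), which is not explicitly stated in \cite{S92}.

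The one place where your write-up and the paper's argument differ in substance is item~(4). You derive the two-sided density bounds from a generic bounded-distortion estimate along $u$-disks, controlled by (H1) and (H4). The paper instead points to the explicit density formula in \cite{S92}: on a local unstable manifold $V=V^u_{loc}(x,\delta(r))$ through $x\in M^-(r,\chi)$, the conditional density is $p(y)=q(y,z)/\int_V q(y,z)\,dm_V$, and the Corollary after \cite[Lemma~5.13]{S92} gives $|q(y,z)-1|<Cr^{\gamma-\alpha}$ for constants $C,\gamma$ depending only on the data in (H1)--(H4). This is of course the same distortion estimate you allude to, but the paper's route has the advantage of making the uniformity in Remark~\ref{re.uniform} immediate (the constants depend only on those in (H1)--(H4)), which is what is actually used later in Lemmas~\ref{lem00} and~\ref{le.entropy}. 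Your sketch gets the bounds but leaves that uniformity implicit; if you keep your version, you should make explicit that the distortion constant is controlled only by $A,\alpha,\lambda,B,\beta,\varepsilon_0$ and not by the particular $u$-disk or the measure $\mu^i$.
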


The conclusions of this theorem were essentially  all obtained  in \cite[Theorem 2 and 3]{P92} with a countable number of measures.
The above formulation with reduction to a finite number of measures follows from \cite[Theorems 5.14 and 5.15]{S92}, with the exception of the last item, which is not explicitly stated in \cite{S92}, but still follows from the results therein. In fact, consider   the set $D^-$ of points whose negative trajectories are defined and  do not hit the singularity set~$\Gamma$, i.e.
 $$D^-=\bigcap_{n=1}^\infty f^n(D).$$
Set for $r>0$ and $\chi\in(0,1)$
  \begin{align*}
M^-(r,\chi)&=\left\{x\in D^-:\rho(f^{-n}(x),\Gamma)\ge r\chi^n,\;\text{for all $n\ge0$}\right\}.
\end{align*}
For each $x\in M^-(r,\chi)$, there exists a local unstable manifold $V=V^u_{loc}(x,\delta(r))$ and, moreover,
the  conditional measure of each $\mu=\mu^i$ on   $V $ is given as 
 $$\mu_V(E)=\int_{V\cap E}p(y)dm_V(y),
 $$
where $m_V$ is Lebesgue measure on $V$. 
In addition, for all $y\in V$, 
\begin{equation}\label{eq.density1}
p(y)=\frac1{\int_V q (y,z) dm_V} q (y,z),
\end{equation}
and it follows from the Corollary after \cite[Lemma 5.13]{S92} that there are constants $\gamma>\alpha$ and  $C>0$ such that 
 \begin{equation}\label{eq.density2}
|q (y,z)-1|<C r^{\gamma-\alpha}.
\end{equation}
This gives that last item of Theorem~\ref{th.finite}.

\begin{remark}\label{re.uniform}
The estimates  in \cite{S92} give that  the constants $C$ and $\gamma$ in \eqref{eq.density2} only depend on the constants involved in (H1)-(H4).
\end{remark}

\subsection{Lorenz maps}\label{Lorenzmaps}
We present here a set of conditions given in \cite[Section 2]{S92a}, defining a class of piecewise hyperbolic  diffeomorphisms that satisfy conditions (H1)-(H7) and includes  the two-dimensional Poincar\'e return map $F:\Sigma\to\Sigma$ associated with the Lorenz flow in an appropriate  coordinate system.

\begin{enumerate}
\item[(L1)]    The domain $\Sigma$ is the square $\{(x,y): |x|\le 1,\,|y|\le 1\}\subset \R^2$
  and the curve $\Gamma\subset \Sigma$, given by  $x=0$, divides  $\Sigma$  into two subdomains $$D_1=\{(x,y)\in\Sigma: x>0\}\qand D_2=\{(x,y)\in\Sigma: x<0\}.$$
  \item[(L2)]  The map $f$ is smooth in both  domains $D_1$ and $D_2$.
   \item[(L3)]  The map $f$ is given by $f(x,y)=(G(x,y), H(x,y))$, with
   $$\|H_y\|<1\qand \|G^{-1}_x\|<1,$$
$$1-\|H_y\|\cdot \|G^{-1}_x\|>2\sqrt{\|G^{-1}_x\|\cdot \|H_y\|\cdot \|G^{-1}_x H_x\|},$$
$$\|H_y\|\cdot \|G^{-1}_x H_x\|\cdot\|G_y\|<(1-\|H_y\|)(1-\|G^{-1}_x\|),$$
%
where $G_x, G_y,H_x $ and $H_y$  denote the partial derivatives 
and $\|(x,y)\|=\sup\{|x|,|y|\}$.
\item[(L4)] Letting $  G_j, H_j$ denote respectively the restrictions of $ G,H$ to $D_j$, for $j=1,2$,   in  a neighbourhood of $\Gamma$ the functions $G_j(x,y)$, $H_j(x,y)$ have the form 
 \begin{align*}
G_j(x,y)&=\tilde G_j (y|x|^\beta,|x|^\alpha ),\\
H_j(x,y)&=\tilde H_j(y|x|^\beta,|x|^\alpha),
\end{align*}
with $\tilde G_j,\tilde H_j$   smooth  in a neighbourhood of $(0,0)$ and constants $\alpha\in(0,1)$ and $\beta>0$.
\end{enumerate}

\begin{theorem}\label{th.L->H}
If  $f$ satisfies (L1)-(L4), then $f$  satisfies (H1)-(H7). \end{theorem}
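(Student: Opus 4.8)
\textbf{Proof plan for Theorem~\ref{th.L->H}.}
The plan is to verify each of the conditions (H1)--(H7) in turn, using the explicit normal form provided by (L1)--(L4). The geometry is dictated by (L3): the hypotheses $\|H_y\|<1$ and $\|G^{-1}_x\|<1$ together with the two quadratic inequalities are precisely the standard sufficient conditions (in the spirit of the Alekseev--Moser cone criterion) guaranteeing that the constant cone fields
$\mathcal K^u_x=\{(v_1,v_2):|v_2|\le c|v_1|\}$ and $\mathcal K^s_x=\{(v_1,v_2):|v_1|\le c|v_2|\}$, for a suitable $c>0$ read off from the coefficients in (L3), are invariant and uniformly expanded/contracted in the appropriate time directions. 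This gives (H2) and, by the same estimates applied to pairs of points on $u$- and $s$-disks rather than tangent vectors (a mean value / graph-transform argument using the smoothness in each $D_j$ from (L2)), also (H6). I would record a fixed $\lambda>1$ and $c>0$ once and for all at this stage.

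Next I would turn to the singularity estimates, which is where the normal form (L4) does the real work. Near $\Gamma=\{x=0\}$ the map is $f_j(x,y)=(\tilde G_j(y|x|^\beta,|x|^\alpha),\tilde H_j(y|x|^\beta,|x|^\alpha))$ with $\tilde G_j,\tilde H_j$ smooth and $\alpha\in(0,1)$, $\beta>0$; differentiating, $d_xf$ picks up factors $|x|^{\alpha-1}$ and $|x|^{\beta-1}$ from the chain rule, so both $\|d_xf\|$ and $\|d^2_xf\|$ are $O(|x|^{-\gamma})$ for $\gamma=\max\{1-\alpha,1-\beta,\ldots\}$, and since $\rho(x,\Gamma)=|x|$ near $\Gamma$ this is exactly (H1) with $A$ absorbing the sup over the (compact complement of a neighbourhood of $\Gamma$) region where $f$ is genuinely smooth. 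For (H3) one needs a quantitative bound on how a thin strip around $\Gamma$ pulls back: because $G$ is uniformly expanding in $x$ (that is $\|G^{-1}_x\|<1$, so $|G_x|\ge\lambda$ near $x=0$ up to the $|x|^{\alpha-1}$ blow-up which only helps), the preimage $f^{-1}(\Gamma_\varepsilon)$ is a union of strips whose $x$-width is $O(\varepsilon)$, and iterating with the expansion gives $m(f^{-n}(\Gamma_\varepsilon))<B\varepsilon^\beta$; here one uses that the images of the discontinuity curve under forward iterates stay transverse to the stable cone (an easy consequence of (H2)), so pullbacks of $\Gamma$ are uniformly transverse to the unstable direction and their $\varepsilon$-neighbourhoods have area controlled linearly in $\varepsilon$, which in particular gives the estimate with exponent $\beta$ (indeed with $\beta=1$, which is stronger). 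The disk versions (H4) and (H7) follow from the same computation restricted to a $u$-disk $V$: since $T_xV\subset\mathcal K^u_x$, the intrinsic Lebesgue measure $m_V$ of $V\cap f^{-n}(\Gamma_\varepsilon)$ is controlled by the transversality of $f^{-n}(\Gamma)$ to $\mathcal K^u$, the constants depending only on the expansion rate $\lambda$, the cone width $c$, and the size $r$ of $V$ (hence (H7)), and the large-$n$ improvement in the first inequality of (H4) comes from the fact that after $n_0$ iterates $V$ has grown (by (H6)(b)) to a definite size and spread out, so the proportion of it landing in $\Gamma_\varepsilon$ is genuinely small.

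Finally, (H5) — that the partition $\{D_1,D_2\}$ is generating — is where I expect the main subtlety, and where the Lorenz-specific features matter most. The point is that a cylinder $\bigcap_{i=-n}^n f^i(D_{j_i})$ is the intersection of a forward cylinder, which is thin in the unstable ($x$) direction because $f$ expands in $x$ and $\Gamma$ is a curve transverse to $\mathcal K^u$, with a backward cylinder, which is thin in the stable ($y$) direction by the dual (contraction of $f^{-1}$ on stable disks, (H6)(a)) argument. The forward contraction of diameters in the $x$-direction at geometric rate $\lambda^{-n}$ follows directly from (H2)/(H6)(b) once one checks that each connected piece of such a cylinder is a $u$-curve (graph over the $x$-axis with slope in the cone); the backward contraction in the $y$-direction is the symmetric statement. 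The care is needed because of the unbounded derivative near $\Gamma$: one must argue that the relevant pieces of the dynamical partition do not accumulate on $\Gamma$ in a way that destroys the uniform diameter bound, which is handled exactly by the area estimates (H3)--(H4) already established (the bad set near $\Gamma$ is negligible and can be excised). Combining the two one-dimensional shrinkings gives $\operatorname{diam}\bigl(\bigcap_{i=-n}^n f^i(D_{j_i})\bigr)\to0$ uniformly, establishing (H5) and completing the verification that (L1)--(L4) imply (H1)--(H7).
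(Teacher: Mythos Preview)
The paper does not actually prove this theorem: its entire argument is the single sentence ``See \cite[Section 2 \& Section 3]{S92a} or the remark after \cite[Theorem 2.1]{S92}.'' In other words, the result is quoted from Sataev's work rather than reproved. Your proposal therefore goes considerably further than the paper itself, supplying an outline of the verification that the paper delegates to the references.

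As a plan, your sketch is broadly sound and tracks the structure one finds in Sataev: (L3) yields the invariant cone fields and uniform hyperbolicity constants for (H2) and (H6); the normal form (L4) gives the polynomial blow-up of the derivative for (H1); and the generating property (H5) follows from combining unstable expansion with stable contraction. Two places deserve a caution before you would call this a proof rather than a plan. First, for (H3) your line ``iterating with the expansion gives $m(f^{-n}(\Gamma_\varepsilon))<B\varepsilon^\beta$'' elides the real issue: the number of connected components of $f^{-n}(\Gamma)$ grows with $n$, and one must show that the expansion in the unstable direction (and a bounded-distortion argument using (L4)) beats this growth so that the \emph{total} measure stays uniformly bounded; simple transversality plus linear expansion is not enough on its own. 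Second, for (H5) the phrase ``the bad set near $\Gamma$ is negligible and can be excised'' is not quite the mechanism: (H5) is a statement about \emph{all} cylinders, not almost all, so one cannot discard a null set; rather, one argues that any cylinder that meets a neighbourhood of $\Gamma$ is already small in diameter because of the strong expansion there (the derivative blows up like $|x|^{\alpha-1}$), while cylinders bounded away from $\Gamma$ shrink at the uniform rate $\lambda^{-n}$. With those two points tightened, your outline matches what Sataev does.
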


See \cite[Section 2 \& Section 3]{S92a} or the remark after \cite[Theorem 2.1]{S92}.

\subsection{Statistical stability}\label{se.stat}

Consider  a sequence of piecewise hyperbolic diffeomorphisms  $(f_n)_n$ and $f$ such that  (H1)-(H7) hold for  all $f_n$ and  $f$. For the conclusions about the continuity of   $u$-Gibbs measures we need the following  conditions. 

\begin{enumerate}
\item[(S1)] The constants $A,\alpha,B,\beta,\lambda,\varepsilon_0, B_0$ and the cones $\mathcal K^u_x, \mathcal K^s_x$ appearing in (H1)-(H4) are the same for all $f$ and $f_n$.
 \item[(S2)] The functions $f_j$ and $f_{n,j}$ have continuous extensions to the sets $D_j$ and $D_{n,j}$, respectively, for all $n$ and $j$.
  \item[(S3)] The sets $D_{n,j}$ converge to the sets  $D_j$, in the sense that for every $\varepsilon>0$ there is $n(\varepsilon)\in\N$ such that if $n>n(\varepsilon)$, then
  $$
  D_j\setminus \Gamma_\varepsilon\subset D_{n,j}\qand 
  D_{n,j} \setminus \Gamma_{n,\varepsilon}\subset   D_j.
  $$
  \item[(S4)] The restriction of $f_n$ to  $D_{n,j}$ is an equicontinuous family and it converges to the restriction of $f$ to $D_j$ in the $C^0$ topology.
   \item[(S5)] On every  $D_{j}\setminus \Gamma_\varepsilon$ the sequence $f_n$  converges to   $f$   in the $C^1$ topology.
  \end{enumerate}
  
  The next result is obtained in   \cite[Theorem 7.2]{S92}.
  
  \begin{theorem}\label{th.stable}
Consider   a sequence of maps $(f_n)_n$ and $f$  such that   conditions  (H1)-(H7) and (S1)-(S5) hold for all $f_n$ and $f$. If $\mu_n$ is a $u$-Gibbs measure for $f_n$ and $\mu$ is a limit point of $\mu_n$ in the weak* topology, then $\mu$ is a $u$-Gibbs measure for $f$. \end{theorem}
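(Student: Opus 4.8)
The plan is to verify directly that the limit measure $\mu$ enjoys the three defining properties of a $u$-Gibbs measure for $f$: $f$-invariance, positivity of the Lyapunov exponents $\mu$-almost everywhere, and absolute continuity of the conditional measures on local unstable manifolds. The starting point is a uniform control of the mass near the singularity set. By (S1) the constants in (H1)--(H4) are common to $f$ and to all the $f_n$, so Remark~\ref{re.uniform} guarantees that the density bounds behind \eqref{eq.density1}--\eqref{eq.density2} for the $u$-Gibbs measures $\mu_n$ hold with $n$-independent constants; combining the $f_n$-invariance of $\mu_n$ with (H4) (both estimates) and (S3), which lets one compare $\Gamma_{n,\varepsilon}$ with $\Gamma_\varepsilon$, I would obtain an estimate of the form $\mu_n(\Gamma_\varepsilon)\le C\varepsilon^{\beta}$ with $C$ independent of $n$. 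Passing to the limit along the subsequence realizing $\mu$ then gives $\mu(\Gamma_\varepsilon)\le C\varepsilon^{\beta}$, hence $\mu(\Gamma)=0$ and no escape of mass into the singularity set.

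Granting this, invariance follows by a standard approximation. For a fixed continuous $\varphi:\Sigma\to\R$ one splits $\int\varphi\circ f_n\,d\mu_n-\int\varphi\circ f\,d\mu$ into a contribution over $\Sigma\setminus\Gamma_\varepsilon$, where (S4)--(S5) give $\varphi\circ f_n\to\varphi\circ f$ uniformly, and a contribution over $\Gamma_\varepsilon$, which is bounded by $2\|\varphi\|_\infty\,C\varepsilon^{\beta}$ by the previous step; since $\int\varphi\circ f_n\,d\mu_n=\int\varphi\,d\mu_n$ and $\mu_n\to\mu$ weakly, letting first $n\to\infty$ and then $\varepsilon\to0$ yields $\int\varphi\circ f\,d\mu=\int\varphi\,d\mu$. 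For the hyperbolicity I would use that the conefields $\mathcal K^u_x,\mathcal K^s_x$ and the rate $\lambda>1$ of (H2) are the same for $f$ and all $f_n$ (S1): since $\mu(\Gamma)=0$, $\mu$-almost every point has its entire forward orbit in $D$, and the invariant-cone inequalities force the Lyapunov exponents of $\mu$ to be at least $\log\lambda$ along $\mathcal K^u$ and at most $-\log\lambda$ along $\mathcal K^s$; in particular they are nonzero $\mu$-almost everywhere and $\mu$ admits local stable and unstable manifolds $\mu$-a.e., of uniform size on the Pesin-type blocks $M^-(r,\chi)$.

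The crux is absolute continuity of the unstable conditionals. The idea is to fix a block $M^-(r,\chi)$ of positive $\mu$-measure together with a measurable partition subordinate to the unstable foliation on it, and to transfer the absolute continuity of the conditionals of the $\mu_n$ --- which holds with the uniform density bounds \eqref{eq.density1}--\eqref{eq.density2} --- to the disintegration of $\mu$. Because $f_n\to f$ in the $C^1$ topology off $\Gamma_\varepsilon$ (S5) and the hyperbolic data are $n$-independent, the local unstable manifolds of $f_n$ through points of the corresponding blocks $M^-_n(r,\chi)$ converge, in the graph-transform sense, to those of $f$; choosing the blocks so that the relevant estimates hold simultaneously for all large $n$, one identifies the disintegration of $\mu$ along the unstable plaques as a weak* limit of the disintegrations of the $\mu_n$, whose conditionals are absolutely continuous with densities obeying the uniform bounds --- hence so are those of $\mu$, and $\mu$ is a $u$-Gibbs measure for $f$.

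I expect this last passage to the limit for the conditional measures to be the main obstacle: one must arrange the Pesin blocks, the subordinate partitions, and the disintegrations to be robust enough that the mere $C^1$ convergence $f_n\to f$ away from the moving singularity sets $\Gamma_{n,\varepsilon}$ can be promoted to weak* convergence of the unstable conditionals. This is precisely where the uniformity hypotheses (S1)--(S5) --- the $n$-independence of all the hyperbolicity constants and of the density estimates --- are indispensable, and where one leans most heavily on the quantitative construction behind Theorem~\ref{th.finite}.
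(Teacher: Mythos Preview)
The paper does not prove Theorem~\ref{th.stable} at all: it is stated as a direct citation of \cite[Theorem~7.2]{S92}, with no argument supplied. So there is nothing in the paper to compare your proposal against beyond the reference itself. In the context of this paper the appropriate ``proof'' is simply the sentence ``See \cite[Theorem~7.2]{S92}.''

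That said, your sketch is a reasonable outline of the strategy behind Sataev's result. The invariance argument and the deduction of nonzero Lyapunov exponents from the common conefields are standard and would go through. You have also correctly identified the genuine difficulty: promoting weak* convergence of the $\mu_n$ to convergence of their unstable disintegrations, so that the uniform density bounds \eqref{eq.density1}--\eqref{eq.density2} survive in the limit. Your description of this step is honest but remains a plan rather than a proof --- the technical work of aligning the Pesin blocks $M^-_n(r,\chi)$ with $M^-(r,\chi)$, controlling the convergence of the unstable plaques under only $C^1$ convergence away from $\Gamma_\varepsilon$, and showing that the resulting limit of conditionals coincides with the Rokhlin disintegration of $\mu$, is precisely what \cite{S92} carries out and what you would have to reproduce. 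For the purposes of this paper, though, none of that is needed: the theorem is quoted, not proved.
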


As a consequence of  Theorem~\ref{th.finite} and Theorem~\ref{th.stable}, we obtain the next result.
  
    \begin{corollary}\label{co.stable}
Consider   a sequence of maps $(f_n)_n$ and $f$  such that  conditions  (L1)-(L4) and (S1)-(S5) hold for all $f_n$ and $f$. If $\mu_n$ is a $u$-Gibbs measure for $f_n$ and $\mu$ is a limit point of $\mu_n$ in the weak* topology, then $\mu$ is a $u$-Gibbs measure for $f$. \end{corollary}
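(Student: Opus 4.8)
The plan is to deduce Corollary~\ref{co.stable} by assembling two ingredients already available in the excerpt: Theorem~\ref{th.L->H}, which upgrades the Lorenz conditions (L1)--(L4) to the hyperbolicity conditions (H1)--(H7), and Theorem~\ref{th.stable}, which gives weak* stability of $u$-Gibbs measures under (H1)--(H7) together with (S1)--(S5). First I would invoke Theorem~\ref{th.L->H} for each map in the sequence: since $(f_n)_n$ and $f$ all satisfy (L1)--(L4) by hypothesis, they all satisfy (H1)--(H7). This is the part of the hypothesis-list of Theorem~\ref{th.stable} concerning the single-map conditions. The conditions (S1)--(S5) are assumed directly in the statement of the corollary, so the full hypothesis set of Theorem~\ref{th.stable} is met.

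The second step is simply to apply Theorem~\ref{th.stable}: given a $u$-Gibbs measure $\mu_n$ for each $f_n$ and a weak* limit point $\mu$ of the sequence $(\mu_n)_n$, that theorem concludes $\mu$ is a $u$-Gibbs measure for $f$, which is exactly the assertion of the corollary. The only point that merits a sentence of care is that Theorem~\ref{th.L->H} must be applied to every $f_n$ \emph{uniformly enough} that the constants produced are compatible with (S1); but in fact (S1) is an explicit hypothesis of the corollary, so no uniformity needs to be extracted from Theorem~\ref{th.L->H} itself — one only needs that \emph{some} valid choice of (H1)--(H7) data exists for each map, which Theorem~\ref{th.L->H} supplies, and then (S1) asserts these data can be taken common to all of them.

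I do not anticipate a genuine obstacle here; the corollary is a bookkeeping combination of two previously stated results. The one place to be slightly careful is to make sure the logical dependency is clean: Theorem~\ref{th.stable} requires (H1)--(H7) for \emph{all} of $f_n$ and $f$, and we get these from (L1)--(L4) via Theorem~\ref{th.L->H}; it also requires (S1)--(S5), which are hypotheses of the corollary. So the proof is: ``By Theorem~\ref{th.L->H}, conditions (H1)--(H7) hold for $f$ and for all $f_n$. Together with the assumed conditions (S1)--(S5), Theorem~\ref{th.stable} applies and yields the conclusion.'' That is the entire argument, and writing it out in full would take only two or three lines.
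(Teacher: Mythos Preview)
Your proposal is correct and matches the paper's approach: the paper presents the corollary as an immediate consequence of the preceding theorems with no separate proof, simply stating ``As a consequence of Theorem~\ref{th.finite} and Theorem~\ref{th.stable}, we obtain the next result.'' Your citation of Theorem~\ref{th.L->H} (rather than Theorem~\ref{th.finite}) for the passage from (L1)--(L4) to (H1)--(H7) is in fact the more precise reference, and your remark that (S1) is assumed outright---so no uniformity needs to be extracted from Theorem~\ref{th.L->H}---is a correct clarification.
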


\section{Impulsive  Lorenz semiflows}

Consider  $X$   the Lorenz flow defined in an open set $M\subset \R^3$ and $F:\Sigma\to \Sigma$   the Poincar\'e return map as described in
Subsection~\ref{se.Lorenz}. Let   $\mu$ be the unique $u$-Gibbs measure for $F$. It is well known that the conditional measures of $\mu$ on local unstable manifolds have densities (with respect to Lebesgue measure) bounded from above and below by positive constants. Together with~\eqref{eq.are}, this gives that the return time $R$ is integrable with respect to $\mu$.  For a continuous function $\phi:M\to \R$, it is  well known that the measure given by
\begin{equation}\label{eq.ali0}
\nu(\phi)=\frac1{\int_{\Sigma} R d\mu }\int_{\Sigma'}\int_0^{R (x)}\phi\circ X_t(x) dt d\mu (x)
\end{equation}
is invariant under the Lorenz flow $X$; see Proposition~\ref{pr.invariance} below for a proof in the case   of impulsive perturbations of the Lorenz system (which contains the Lorenz system as the particular case $\varphi=\inc_\Sigma$). 
The entropy of the flow $X$ with respect to the measure $\nu$ is by definition the entropy of its time one map, i.e.
 $$h_{\nu}(X)=h_{\nu}(X_1)$$
By Abramov formula \cite{A59}, we have 
\begin{equation}\label{eq.abramov}
h_{\nu}(X)=\frac{h_{\mu}(F)}{\int_{\Sigma} R d\mu } .
\end{equation}  
Consider from now on the impulsive dynamical system $(M,X,\Sigma,\varphi)$ as in Theorem~\ref{th.main} and the corresponding impulsive flow $Y$. 


\subsection{Poincar\'e return map}\label{sub.poincare}

In this subsection we build a Poincar\'e return map for the impulsive semiflow $Y$.
Assuming $\varphi$ is sufficiently close to $\inc_\Sigma\in \emb^1(\Sigma,M)$, then $\Sigma'=\varphi(\Sigma)$ is also a global cross section for $X$.
We are going to define a Poincar\'e map for   $Y$ on $\Sigma'$. 
Since we assume $\Sigma'\subset B^+(\Sigma)$, we have   $\Sigma'$   below $\Sigma$, with respect to the $z$ coordinate. For each $x\in\Sigma'$, set  
  \begin{figure}[h]
 \begin{center}
 \includegraphics[width=8cm]{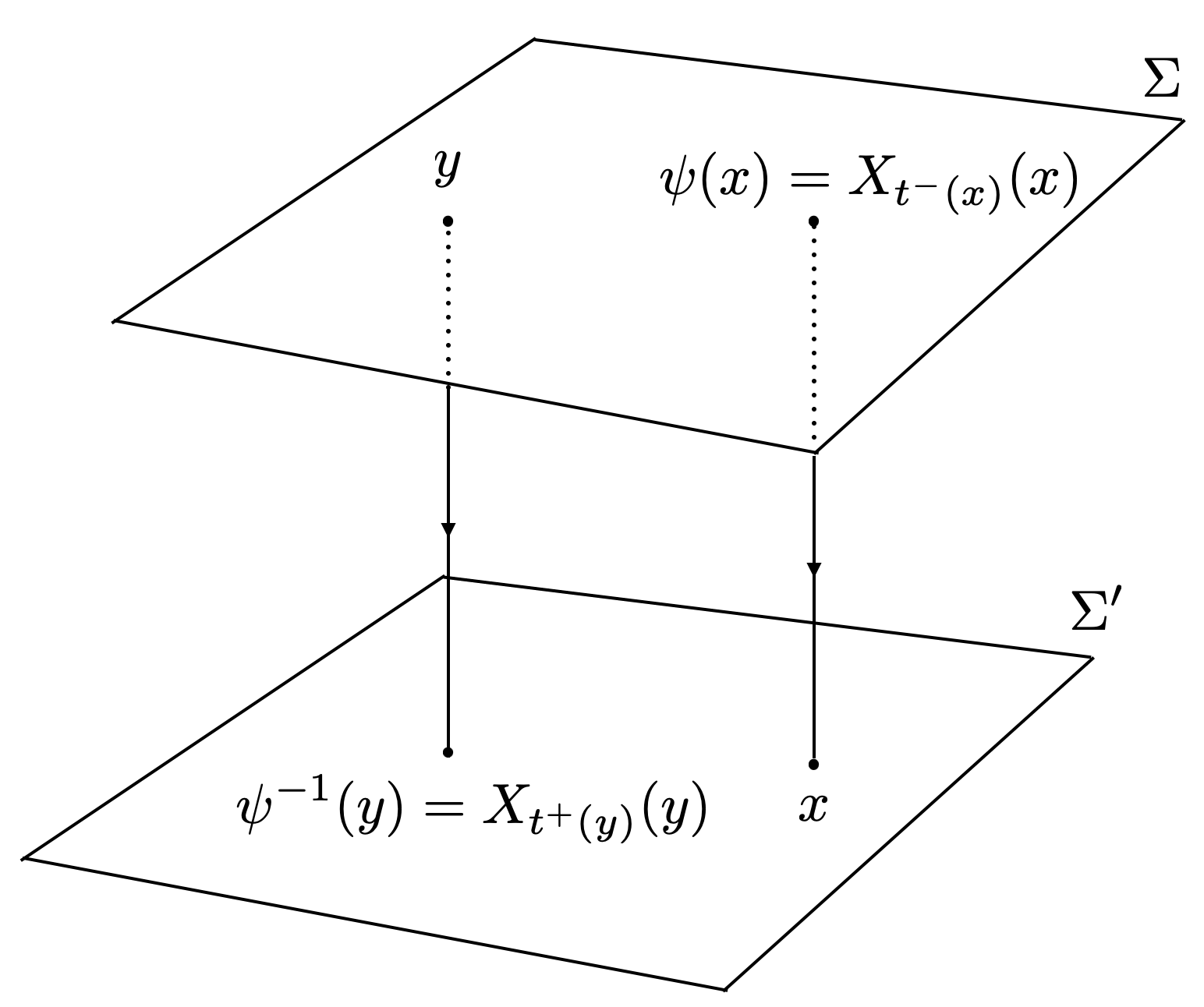}
 \caption{The maps $\psi$ and $\psi^{-1}$}
 \end{center}
 \end{figure}
 \begin{equation}\label{eq.t-}
t^-(x)=\max\{t<0 : X_{t}(x)\in \Sigma\}
\end{equation}
 and for $y\in\Sigma$, set  
  \begin{equation}\label{eq.t+}
  t^+(y)=\min\{t>0 : X_{t}(y)\in \Sigma'\}.
  \end{equation}
Let
 $\psi:\Sigma'\to\psi(\Sigma')\subset \Sigma$ be  the diffeomorphism given by 
 \begin{equation}\label{eq.psi}
\psi(x)= X_{t^-(x)}(x),
\end{equation}
 with an inverse $\psi^{-1}: \psi(\Sigma')\to\Sigma'$ given by
 $$\psi^{-1}(x)= X_{t^+(x)}(x).$$ 
The maps $t^\pm$, the section  $\Sigma'$ and the  diffeomorphism $\psi$  depend on the impulsive system, but for simplicity we will not make  this explicit  in the notation at this stage, see Figure 2 for a pictorial illustration.
 We define the \emph{Poincar\'e map} associated with the impulsive flow $F_Y:\Sigma'\to\Sigma'$ by
  \begin{equation}\label{eq.leo}
F_Y=\varphi\circ F\circ \psi .
\end{equation}

\begin{figure}[h]
 \begin{center}
 \includegraphics[width=10cm]{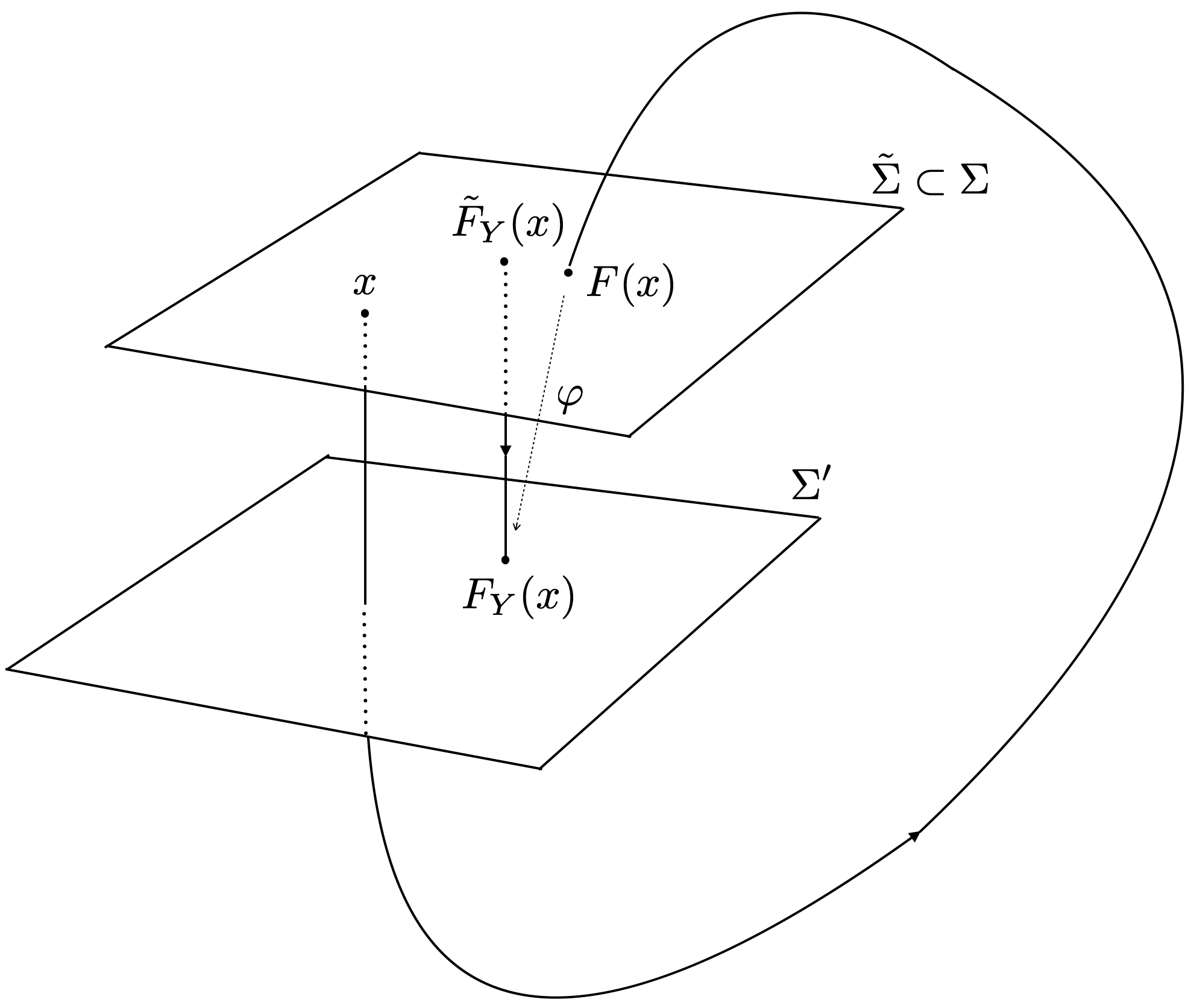}
 \caption{The maps $F_Y$ and $\tilde F_Y$}
 \end{center}
 \end{figure}

 Observe that $F_Y$ is a piecewise $C^{1+\alpha}$ map with a singularity set $\Gamma_Y=\psi^{-1}(\Gamma)$.
 Actually, we are going to   use  a version of $F_Y$   translated to $\tilde\Sigma=\psi(\Sigma')\subset \Sigma$, by
considering
 $\tilde F_Y:\tilde\Sigma\to\tilde\Sigma$  defined by
\begin{equation}\label{eq.lavinia}
 \tilde F_Y = \psi \circ F_Y \circ \psi^{-1} .
\end{equation}
Notice that $F_Y$ and $\tilde F_Y$ are conjugate dynamical systems via a smooth conjugacy, see Figure 3 for a pictorial illustration.
It follows from~\eqref{eq.leo} and \eqref{eq.lavinia} that
\begin{equation}\label{eq.hadi}
 \tilde F_Y= \psi\circ\varphi\circ F.
\end{equation}
Clearly,  $\tilde F_Y$ has the same singularity set $\Gamma$ as $F$.
Since  $\varphi$ is a $C^2$ map  close to $\inc_\Sigma$ in $ \emb^1(\Sigma,M)$, it follows  that  $\tilde F_Y$ is a small $C^{1+\alpha}$ perturbation of $F$. Notice however that $\tilde F_Y$ is \emph{not} necessarily a Poincar\'e map for a Lorenz flow near  $X$.

\begin{remark}
Although $F$ has an invariant (stable) foliation, there is no guarantee that the same happens with $F_Y$ (and therefore also $\tilde F_Y$). The absence of such an invariant foliation and the consequent lack of existence of a quotient transformation, as in the Poincar\'e transformation of the Lorenz flow, make the study of the ergodic properties of $\tilde F_Y$ considerably more complicated than that of $F$.
\end{remark}

\begin{lemma}\label{le.L1to5}
$\tilde F_Y$ satisfies (L1)-(L4).
\end{lemma}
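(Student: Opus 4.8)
The plan is to verify conditions (L1)--(L4) directly for $\tilde F_Y = \psi\circ\varphi\circ F$, exploiting the fact that $F$ is known to satisfy (L1)--(L4) and that $\psi\circ\varphi$ is a $C^2$ diffeomorphism onto its image which is $C^1$-close to the identity on $\tilde\Sigma$ when $\varphi$ is $C^1$-close to $\inc_\Sigma$. Conditions (L1) and (L2) are essentially immediate: the domain $\tilde\Sigma = \psi(\Sigma')$ is identified with the square via the coordinate chart already used for $\Sigma$, the singularity curve is the same $\Gamma=\{x=0\}$ as for $F$ (this is recorded right before the lemma in equation~\eqref{eq.hadi}), and $\tilde F_Y$ is smooth on each of $D_1, D_2$ because $F$ is smooth there and $\psi\circ\varphi$ is smooth everywhere. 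So the real content is (L3) and (L4).

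For (L3), I would write $\tilde F_Y(x,y) = (\tilde G(x,y),\tilde H(x,y))$ and compute its Jacobian as the product $D(\psi\circ\varphi)\big|_{F(x,y)}\cdot DF(x,y)$. Writing $F(x,y)=(G(x,y),H(x,y))$ with the partials $G_x,G_y,H_x,H_y$ satisfying the three inequalities of (L3), and writing $D(\psi\circ\varphi) = I + E$ where $\|E\|$ can be made as small as we like (in $C^0$ norm over $\tilde\Sigma$) by taking $\varphi$ close to $\inc_\Sigma$ in $\emb^1$, I would expand the entries of the product Jacobian and compare with $G_x,G_y,H_x,H_y$. The key point is that the quantities $\|H_y\|$, $\|G_x^{-1}\|$, $\|G_x^{-1}H_x\|$, $\|G_y\|$ entering the three cone-type inequalities in (L3) vary continuously under a $C^1$-small perturbation of the map, and since $F$ satisfies all three inequalities \emph{strictly} (they are open conditions), there is a uniform gap that survives a sufficiently small perturbation. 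Concretely I would show $\|\tilde H_y\|\le \|H_y\| + o(1)$, $\|\tilde G_x^{-1}\|\le \|G_x^{-1}\|+o(1)$, etc., where $o(1)\to 0$ as $\varphi\to\inc_\Sigma$ in $\emb^1$, and then invoke continuity of the three inequalities to conclude they persist. One technical nuisance is controlling $\|\tilde G_x^{-1}\|$ rather than $\|\tilde G_x\|$: I would use that $\tilde G_x = G_x + (\text{small})$ together with the standard perturbation bound $\|(A+B)^{-1}\|\le \|A^{-1}\|/(1-\|A^{-1}\|\|B\|)$, valid once the perturbation is small relative to $\|G_x^{-1}\|^{-1}$, which holds uniformly since $\|G_x^{-1}\|$ is uniformly bounded by (L3) for $F$.

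For (L4), which describes the behaviour near $\Gamma$ in terms of the variables $y|x|^\beta$ and $|x|^\alpha$, I would argue that composing $F$ on the left with the \emph{smooth} map $\psi\circ\varphi$ preserves this functional form. Indeed, if $G_j(x,y)=\tilde G_j(y|x|^\beta,|x|^\alpha)$ and $H_j(x,y)=\tilde H_j(y|x|^\beta,|x|^\alpha)$ near $\Gamma$ with $\tilde G_j,\tilde H_j$ smooth near $(0,0)$, then the image point $F(x,y)$ lies in a fixed small neighbourhood of $F(\Gamma)$ (away from the singularity, since $\varphi(\Sigma)\cap\Sigma=\emptyset$-type separation keeps $F(\Gamma)$ at positive distance from $\Gamma$ in the relevant sense), and $\psi\circ\varphi$ is a smooth function of its argument there; hence the components of $\tilde F_Y = \psi\circ\varphi\circ F$ are of the form $(\text{smooth})\circ(\tilde G_j,\tilde H_j)(y|x|^\beta,|x|^\alpha)$, which is again a smooth function of $(y|x|^\beta,|x|^\alpha)$ with the \emph{same} exponents $\alpha,\beta$. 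I expect this to be the step requiring the most care in the write-up, because one must be sure that the left composition does not destroy the near-$\Gamma$ normal form --- but since left composition is with a globally smooth diffeomorphism and the normal form involves only the \emph{input} variables $x,y$, the exponents $\alpha,\beta$ are untouched and only the smooth functions $\tilde G_j,\tilde H_j$ get post-composed with something smooth. The main obstacle overall is therefore bookkeeping in (L3): making the $o(1)$ error terms explicit enough to see that the three strict inequalities survive, while keeping the perturbation measured in the right topology ($\emb^1$, i.e.\ $C^1$) rather than $C^2$, even though $\varphi$ itself is assumed $C^2$ (the $C^2$ hypothesis is what gives the $C^{1+\alpha}$ regularity of $\tilde F_Y$ needed elsewhere, not (L3)--(L4)).
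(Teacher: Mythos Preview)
Your proposal is correct and follows essentially the same approach as the paper: the paper's proof is a terse version of exactly what you describe, noting that $\tilde F_Y=\psi\circ\varphi\circ F$ shares the singularity set $\Gamma$ with $F$ so (L1)--(L2) reduce to a choice of coordinates, that the strict inequalities in (L3) are open under $C^1$-small perturbations, and that left composition with the smooth map $\psi\circ\varphi$ preserves the (L4) normal form with the same exponents $\alpha,\beta$. Your write-up simply makes explicit the perturbation estimates (e.g.\ the control of $\|\tilde G_x^{-1}\|$) that the paper leaves implicit.
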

\begin{proof} It follows from~~\eqref{eq.hadi} that  $\tilde F_Y$ has the same singularity set $\Gamma$ of $F$. 
Checking (L1)-(L2) is therefore just a matter of choosing the appropriate coordinate system. For (L3)-(L4), note  that $\varphi$ (and hence $\psi$) can be taken arbitrarily close to the inclusion map and  recall that $F$ satisfies (L3)-(L4); note also that the conditions in (L3) hold for small perturbations of $F$  and use \eqref{eq.hadi} to obtain  (L4).
\end{proof}

The previous lemma together with  Theorem~\ref{th.finite} yield the following conclusion.

\begin{corollary}\label{co.finite1}
If $\varphi$ is a $C^2$ map  close to $\inc_\Sigma$ in $ \emb^1(\Sigma,M)$, 
 then $\tilde F_Y$ has a finite number of physical   measures $\tilde\mu^1_Y,\dots, \tilde\mu^s_Y$      with $\leb(\tilde\Sigma\setminus (\mathcal B_{\tilde\mu_Y^1}\cup\cdots \mathcal B_{\tilde\mu_Y^s}))=0$ such that, for each $1\le i \le s$,  
\begin{enumerate}
\item  $\tilde\mu_Y^i$ is an ergodic $u$-Gibbs measure;
\item the entropy formula holds for   $\tilde\mu_Y^i$;
\item there exists $A_i$ with $\tilde\mu^i_Y(A_i)=1$ such that Lebesgue almost every point in $\mathcal B_{\tilde\mu^i_Y}$ belongs in the stable manifold of some point in   $A_i$;
\item the densities   of the conditionals of   $\tilde\mu_Y^i$ with respect to the Lebesgue measure on local unstable manifolds are  bounded from above and below by uniform constants.
\end{enumerate}
%
%
%
\end{corollary}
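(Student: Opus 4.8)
The plan is to deduce Corollary~\ref{co.finite1} as an immediate consequence of Lemma~\ref{le.L1to5} together with Theorem~\ref{th.L->H} and Theorem~\ref{th.finite}. First I would invoke Lemma~\ref{le.L1to5} to conclude that $\tilde F_Y$ satisfies conditions (L1)--(L4), provided $\varphi$ is a $C^2$ map sufficiently close to $\inc_\Sigma$ in $\emb^1(\Sigma,M)$; this closeness is exactly the hypothesis of the corollary. Then Theorem~\ref{th.L->H} upgrades (L1)--(L4) to the full hyperbolicity package (H1)--(H7), so in particular $\tilde F_Y$ satisfies (H1)--(H4).

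Next I would apply Theorem~\ref{th.finite} to the piecewise $C^{1+\alpha}$ diffeomorphism $f=\tilde F_Y$ on the compact manifold $\tilde\Sigma$. Theorem~\ref{th.finite} then yields directly the finiteness of the set of physical measures $\tilde\mu^1_Y,\dots,\tilde\mu^s_Y$, the fact that the union of their basins has full Lebesgue measure in $\tilde\Sigma$, and items (1)--(3) of the corollary (ergodicity and the $u$-Gibbs property, the entropy formula, and the statement about stable manifolds of a full-measure set $A_i$). Item (4) follows from the discussion after Theorem~\ref{th.finite} leading to the density estimates~\eqref{eq.density1}--\eqref{eq.density2}, which establish the uniform upper and lower bounds on the densities of the conditionals on local unstable manifolds.

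The only genuinely delicate point is the word \emph{uniform} in item (4): one must check that the bounding constants can be chosen independently of the particular impulsive perturbation $\varphi$ (within a fixed $C^2$-neighbourhood of $\inc_\Sigma$). This is where I would appeal to Remark~\ref{re.uniform}, which records that the constants $C$ and $\gamma$ in~\eqref{eq.density2} depend only on the constants appearing in (H1)--(H4). Since $\tilde F_Y$ is a small $C^{1+\alpha}$ perturbation of $F$ (as observed just before the Remark following~\eqref{eq.hadi}), and small perturbations preserve the validity of (L3) and the form (L4) with the same exponents $\alpha,\beta$, the constants in (H1)--(H4) can be taken uniform over all such $\varphi$; hence so can the density bounds. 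I do not expect any serious obstacle here beyond making this uniformity bookkeeping explicit, since all the heavy lifting has already been carried out in Lemma~\ref{le.L1to5}, Theorem~\ref{th.finite}, and Remark~\ref{re.uniform}.
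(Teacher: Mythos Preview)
Your proposal is correct and follows essentially the same approach as the paper, which states just before the corollary that it follows from Lemma~\ref{le.L1to5} together with Theorem~\ref{th.finite}. You are in fact slightly more explicit than the paper in spelling out the intermediate use of Theorem~\ref{th.L->H} and in invoking Remark~\ref{re.uniform} for the uniformity in item~(4).
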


Since~\eqref{eq.hadi} gives that $F_Y$ and $\tilde F_Y$ are conjugate dynamical systems via the smooth conjugacy $\psi$, we get the next consequence of Corollary~\ref{co.finite1}.

\begin{corollary}\label{co.finite2}
If $\varphi$ is a $C^2$ map  close to $\inc_\Sigma$ in $ \emb^1(\Sigma,M)$,  then $F_Y$ has   physical measures $\mu_Y^1=\psi^{-1}_* \mu_Y^1,\dots,\mu_Y^s=\psi^{-1}_* \mu_Y^s$    
 with $\leb( \Sigma'\setminus (\mathcal B_{ \mu_Y^1}\cup\cdots \mathcal B_{ \mu_Y^s}))=0$ such that, for each $1\le i \le s$,  
%
%
\begin{enumerate}
\item  $ \mu_Y^i$ is an ergodic $u$-Gibbs measure;
\item the entropy formula holds for   $ \mu_Y^i$;
\item there exists $A_i$ with $ \mu^i_Y(A_i)=1$ such that Lebesgue almost every point in $\mathcal B_{\mu^i_Y}$ belongs in the stable manifold of some point in   $A_i$;
\item the densities   of the conditionals of   $ \mu_Y^i$ with respect to the Lebesgue measure on local unstable manifolds are  bounded from above and below by uniform constants.
\end{enumerate}
\end{corollary}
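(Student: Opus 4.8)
The plan is to deduce the statement directly from Corollary~\ref{co.finite1} by transporting all four conclusions from $\tilde F_Y$ to $F_Y$ along the conjugacy $\psi$, setting $\mu_Y^i=\psi^{-1}_*\tilde\mu_Y^i$ for $1\le i\le s$. First I would record the properties of $\psi$ that are used: by~\eqref{eq.lavinia} we have $\tilde F_Y=\psi\circ F_Y\circ\psi^{-1}$ with $\psi\colon\Sigma'\to\tilde\Sigma$ a smooth diffeomorphism, $F_Y$ is a piecewise $C^{1+\alpha}$ map with singularity set $\Gamma_Y=\psi^{-1}(\Gamma)$, and, since $\Sigma'$ and $\tilde\Sigma$ are compact, $\sup_x\big(\|d_x\psi\|+\|(d_x\psi)^{-1}\|\big)<\infty$. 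In particular $\psi$ and $\psi^{-1}$ map Lebesgue-null sets to Lebesgue-null sets; moreover $\psi$ carries the hyperbolic apparatus of $\tilde F_Y$ onto that of $F_Y$, so that the local stable and unstable manifolds of $F_Y$ at a point $x$ are the $\psi$-preimages of those of $\tilde F_Y$ at $\psi(x)$, and, $d\psi$ intertwining the derivative cocycles, the Lyapunov exponents of $F_Y$ at $x$ coincide with those of $\tilde F_Y$ at $\psi(x)$.

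With this in hand each item is routine. For the covering of $\Sigma'$ by basins, note that for any continuous $g\colon\Sigma'\to\R$ one has $g\circ F_Y^{\,j}=\big((g\circ\psi^{-1})\circ\tilde F_Y^{\,j}\big)\circ\psi$, so $x\in\mathcal B_{\mu_Y^i}$ precisely when $\psi(x)\in\mathcal B_{\tilde\mu_Y^i}$; hence $\mathcal B_{\mu_Y^i}=\psi^{-1}(\mathcal B_{\tilde\mu_Y^i})$ has positive Lebesgue measure and $m\big(\Sigma'\setminus\bigcup_i\mathcal B_{\mu_Y^i}\big)=m\big(\psi^{-1}(\tilde\Sigma\setminus\bigcup_i\mathcal B_{\tilde\mu_Y^i})\big)=0$. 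Invariance and ergodicity of $\mu_Y^i$ pass to the conjugate system; positivity of the Lyapunov exponents $\mu_Y^i$-a.e.\ follows from their equality along $\psi$; and the disintegration of $\mu_Y^i$ along local unstable manifolds is the $\psi^{-1}$-push-forward of that of $\tilde\mu_Y^i$, hence absolutely continuous with respect to Lebesgue on these manifolds, giving~(1). For~(2), $h_{\mu_Y^i}(F_Y)=h_{\tilde\mu_Y^i}(\tilde F_Y)$ since metric entropy is a conjugacy invariant, while $\int\sum_{\lambda_j(\cdot)>0}\lambda_j\,d\mu_Y^i=\int\sum_{\lambda_j(\cdot)>0}\lambda_j\,d\tilde\mu_Y^i$ by equality of the exponents, so the entropy formula for $\tilde\mu_Y^i$ becomes the entropy formula for $\mu_Y^i$. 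For~(3), put $A_i'=\psi^{-1}(A_i)$, so $\mu_Y^i(A_i')=1$, and since $\psi$ maps stable manifolds to stable manifolds and preserves Lebesgue-null sets, Lebesgue almost every point of $\mathcal B_{\mu_Y^i}=\psi^{-1}(\mathcal B_{\tilde\mu_Y^i})$ lies in the stable manifold of a point of $A_i'$. For~(4), the conditional density of $\mu_Y^i$ on a local unstable manifold $V$ equals, up to normalisation, the conditional density of $\tilde\mu_Y^i$ on $\psi(V)$ precomposed with $\psi$ and multiplied by the Jacobian of $\psi|_V$; this Jacobian and its reciprocal are bounded by constants independent of $V$, so the uniform bounds of Corollary~\ref{co.finite1}(4) are preserved.

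I do not expect a genuine obstacle: the entire content is the invariance under a smooth conjugacy of the notions of physical measure, ergodicity, the $u$-Gibbs property, the entropy formula, stable manifolds and the density bounds. The only point that needs care is the compatibility of the measurable hyperbolic structure of $\tilde F_Y$---its stable/unstable laminations and the disintegrations of the measures $\tilde\mu_Y^i$---with $\psi$, and this is exactly where the bound $\sup_x\big(\|d_x\psi\|+\|(d_x\psi)^{-1}\|\big)<\infty$ coming from compactness of $\Sigma'$ is used.
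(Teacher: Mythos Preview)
Your proposal is correct and follows exactly the paper's approach: the paper's proof is the single sentence that $F_Y$ and $\tilde F_Y$ are conjugate via the smooth diffeomorphism $\psi$, so Corollary~\ref{co.finite2} is an immediate consequence of Corollary~\ref{co.finite1}. You have simply unpacked this one-line justification by verifying that each of the four properties (ergodic $u$-Gibbs, entropy formula, stable-manifold saturation of the basins, uniform density bounds) is invariant under smooth conjugacy, which is precisely what the paper leaves implicit.
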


\begin{remark} Even though the Poincar\'e map $F$ for the Lorenz flow is transitive in the attractor (and thus it has a unique $u$-Gibbs measure) and $F_Y$ is a small perturbation of $F$, we cannot infer the transitivity of $F_Y$. This would be true if $F_Y$ were the Poincar\'e map of a Lorenz-like flow.
\end{remark}

Note that the return time function $R_Y:\Sigma'\to\R$ associated with the Poincar\'e map $F_Y$ for the impulsive semiflow $Y$ is related to the return time $R$ of the Lorenz flow $X$     by
\begin{equation}\label{eq.rushes}
R_Y = R\circ\psi +t^- .
\end{equation}
In the following we simply use $\tilde\mu_Y$ to denote any of the measures given by Corollary~\ref{co.finite1}.

\begin{lemma}\label{lem01}
$R\in L^1(\tilde\mu_Y)$.
\end{lemma}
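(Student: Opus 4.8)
The plan is to reduce the claim $R\in L^1(\tilde\mu_Y)$ to the integrability of the classical Lorenz return time $R$ with respect to Lebesgue measure on unstable leaves, exploiting the fact that by Corollary~\ref{co.finite1}(4) the conditional densities of $\tilde\mu_Y$ on local unstable manifolds are bounded above by a uniform constant. First I would recall that $\tilde F_Y = \psi\circ\varphi\circ F$ by \eqref{eq.hadi}, and since $\varphi$ is $C^1$-close to $\inc_\Sigma$ the diffeomorphism $\psi\circ\varphi$ is uniformly $C^1$-close to the identity; in particular it is bi-Lipschitz with a constant close to $1$. Hence the singularity set of $\tilde F_Y$ is $\Gamma=\{x_1=0\}$, the same as that of $F$, and for points $x\in\tilde\Sigma$ away from $\Gamma$ the quantity $R(x)$ is, up to the bi-Lipschitz distortion of $\psi\circ\varphi$, comparable to the Lorenz return time. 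The key analytic input is the logarithmic estimate $R(x)\approx|\log\rho(x,\Gamma)|$ coming from \eqref{eq.are}, so the integral of $R$ is controlled by $\int|\log\rho(x,\Gamma)|$ against the relevant measure.

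The main step is then to disintegrate $\tilde\mu_Y$ over its local unstable manifolds and estimate the integral leaf by leaf. By Corollary~\ref{co.finite1}(4), on each local unstable manifold $V=V^u_{loc}(x)$ the conditional measure $(\tilde\mu_Y)_V$ has density bounded by a uniform constant $C$ with respect to $m_V$, so it suffices to bound $\int_V R\,dm_V$ uniformly over the (bounded) family of unstable leaves. Since the unstable cone field for $\tilde F_Y$ is the same as (or uniformly close to) that of $F$ — these cones are transverse to $\Gamma$, indeed the unstable direction for Lorenz maps is the nearly horizontal one — the unstable leaves cross $\Gamma$ transversally with angle bounded away from zero. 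Consequently, along such a leaf, $\rho(\cdot,\Gamma)$ is comparable to arc length from the intersection point with $\Gamma$, and $\int_V|\log\rho(\cdot,\Gamma)|\,dm_V$ is finite and uniformly bounded, because $\int_0^{c}|\log t|\,dt<\infty$. Combining with \eqref{eq.are} and the uniform bound on the number of times a leaf of bounded size meets $\Gamma$, one gets $\int_V R\,dm_V\le C'$ uniformly, hence $\int_{\tilde\Sigma} R\,d\tilde\mu_Y = \int\big(\int_V R\,d(\tilde\mu_Y)_V\big)\,d\hat\mu \le C\,C' <\infty$.

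I expect the main obstacle to be a clean justification that the unstable leaves of $\tilde F_Y$ meet the singularity line $\Gamma$ transversally with a uniform angle, and that the number of such intersections on a leaf of given size is uniformly bounded; this is where one really uses that $\tilde F_Y$ satisfies (L1)--(L4) (Lemma~\ref{le.L1to5}) so that its unstable cone field is the Lorenz one and is transverse to $\Gamma=\{x_1=0\}$, together with condition (H4)/(H7) controlling how unstable disks accumulate on $\Gamma$. An alternative, and perhaps cleaner, route that sidesteps the leafwise geometry is to use (H1) directly: since $\|d_x\tilde F_Y\|\le A\,\rho(x,\Gamma)^{-\alpha}$, one has $|\log\rho(x,\Gamma)|\lesssim \log^+\|d_x\tilde F_Y\| + \text{const}$, and then invoke the entropy formula / integrability of $\log\|d\tilde F_Y\|$ against the $u$-Gibbs measure $\tilde\mu_Y$ (item (2) of Corollary~\ref{co.finite1}) to conclude $R\in L^1(\tilde\mu_Y)$ via \eqref{eq.are}; I would present this second argument as the primary one if the first runs into technical friction, and keep the leafwise estimate as the conceptual picture.
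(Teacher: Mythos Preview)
Your primary approach is correct and is exactly the paper's argument, only spelled out in more detail: the paper's proof simply invokes Corollary~\ref{co.finite1}(4) to say that the conditional densities of $\tilde\mu_Y$ on local unstable leaves are bounded above by a uniform constant, reduces integrability of $R$ against $\tilde\mu_Y$ to integrability against Lebesgue measure, and then cites the logarithmic estimate~\eqref{eq.are}. Your leafwise discussion (transversality of unstable leaves to $\Gamma$, hence $\int_V|\log\rho(\cdot,\Gamma)|\,dm_V<\infty$ uniformly) is the content behind the paper's one-line reduction.

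One caveat on your alternative route: from (H1) alone, the inequality $\|d_x\tilde F_Y\|\le A\,\rho(x,\Gamma)^{-\alpha}$ gives $\log^+\|d_x\tilde F_Y\|\lesssim |\log\rho(x,\Gamma)|$, which is the \emph{wrong} direction for what you want; to bound $|\log\rho(x,\Gamma)|$ by $\log^+\|d\tilde F_Y\|$ you would need a \emph{lower} bound on the unstable derivative of the form $\|d_x\tilde F_Y|_{E^u}\|\gtrsim\rho(x,\Gamma)^{-\gamma}$, which does hold for Lorenz maps via (L4) but is not an (H)-hypothesis. Since your first argument already works and matches the paper, this is only a remark.
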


\begin{proof}
Corollary~\ref{co.finite1} gives that the densities of the conditional measures  of  $\mu_Y$ with respect to Lebesgue measure  on local unstable manifolds are  bounded from above and below by uniform constants.
Therefore, the integrability of $R$ with respect to $\tilde\mu_Y$ holds if   it holds with respect to Lebesgue measure on $\Sigma$. 
The integrability with respect to Lebesgue measure is a consequence of the fact that $R$ satisfies~\eqref{eq.are}.
\end{proof}

 \begin{corollary}\label{cor01}
$R_Y\in L^1(\mu_Y)$.
\end{corollary}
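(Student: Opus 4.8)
\textbf{Proof proposal for Corollary~\ref{cor01}.}

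The plan is to transport the $L^1$ integrability of $R$ from the measure $\tilde\mu_Y$ on $\tilde\Sigma$ (given by Lemma~\ref{lem01}) to the measure $\mu_Y$ on $\Sigma'$, and then use the relation~\eqref{eq.rushes} together with the boundedness of $t^-$ to conclude that $R_Y \in L^1(\mu_Y)$. First I would recall from Corollary~\ref{co.finite2} that $\mu_Y = \psi^{-1}_*\tilde\mu_Y$, i.e.\ $\mu_Y$ is the push-forward of $\tilde\mu_Y$ under the smooth diffeomorphism $\psi^{-1}:\tilde\Sigma\to\Sigma'$ (equivalently, $\tilde\mu_Y = \psi_*\mu_Y$). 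By the change-of-variables formula for push-forward measures, for any measurable $g:\Sigma'\to\R$ we have $\int_{\Sigma'} g\,d\mu_Y = \int_{\tilde\Sigma} g\circ\psi^{-1}\,d\tilde\mu_Y$, and $g\in L^1(\mu_Y)$ if and only if $g\circ\psi^{-1}\in L^1(\tilde\mu_Y)$.

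Applying this to~\eqref{eq.rushes}, which reads $R_Y = R\circ\psi + t^-$ on $\Sigma'$, I would compose with $\psi^{-1}$ to obtain $R_Y\circ\psi^{-1} = R + t^-\circ\psi^{-1}$ on $\tilde\Sigma$. By Lemma~\ref{lem01}, $R\in L^1(\tilde\mu_Y)$, so it remains to check that $t^-\circ\psi^{-1}\in L^1(\tilde\mu_Y)$; since $\tilde\mu_Y$ is a probability measure, it suffices that $t^-$ be bounded on $\Sigma'$. This is immediate: by definition~\eqref{eq.t-} and the assumption $\Sigma'\subset B^+(\Sigma) = \{X_t(z): 0<t\le t_0,\ z\in\Sigma\}$, every point of $\Sigma'$ lies on a backward trajectory segment of length at most $t_0$ before hitting $\Sigma$, so $-t_0\le t^-(x)<0$ for all $x\in\Sigma'$; hence $t^-\circ\psi^{-1}$ is bounded and therefore in $L^1(\tilde\mu_Y)$. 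Consequently $R_Y\circ\psi^{-1} = R + t^-\circ\psi^{-1}\in L^1(\tilde\mu_Y)$, which by the change-of-variables equivalence above gives $R_Y\in L^1(\mu_Y)$.

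This argument is essentially routine bookkeeping; there is no serious obstacle. The only point that requires a moment's care is to make sure the push-forward relation between $\mu_Y$ and $\tilde\mu_Y$ is used in the correct direction and that $\psi$ (and hence $\psi^{-1}$) is genuinely a diffeomorphism between $\tilde\Sigma$ and $\Sigma'$, which was established when $F_Y$ and $\tilde F_Y$ were introduced as smoothly conjugate systems in Subsection~\ref{sub.poincare}. One could alternatively bypass the change of variables entirely by noting that Corollary~\ref{co.finite2} asserts the densities of the conditionals of $\mu_Y$ on local unstable manifolds are bounded above and below, so integrability of $R_Y$ against $\mu_Y$ reduces to integrability against Lebesgue measure on $\Sigma'$, which follows from~\eqref{eq.rushes}, the bound on $t^-$, and the logarithmic estimate~\eqref{eq.are} for $R$ pulled back by the smooth map $\psi$; I would present whichever of these two routes is shorter, but the push-forward route is the cleanest given that Lemma~\ref{lem01} is already available.
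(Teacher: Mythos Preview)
Your proposal is correct and follows exactly the paper's approach: use $\mu_Y=\psi^{-1}_*\tilde\mu_Y$ and the change-of-variables formula to rewrite $\int_{\Sigma'} R_Y\,d\mu_Y$ as $\int_{\tilde\Sigma}(R+t^-\circ\psi^{-1})\,d\tilde\mu_Y$, then invoke Lemma~\ref{lem01} for the integrability of $R$ and the boundedness of $t^-\circ\psi^{-1}$. Your write-up is simply more detailed than the paper's three-line proof.
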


\begin{proof}
It follows from~\eqref{eq.rushes} that
$$
\int_{\Sigma'_Y} R_{Y}d\mu_Y= \int_{\Sigma'_Y} (R\circ\psi +t^-)d\psi^{-1}_*\tilde\mu_Y 
=
 \int_{\tilde\Sigma_Y} (R +t^-\circ\psi^{-1} ) d\tilde\mu_Y.
$$
Since $t^-\circ\psi^{-1}$ is a bounded function, the conclusion follows from Lemma~\ref{lem01}.
\end{proof}

\subsection{Physical measures}\label{se.physical}
In this subsection  we lift each physical  measure of the Poincar\'e map $F_Y:\Sigma'\to\Sigma'$ to a physical measure of the impulsive semiflow~$Y$. We perform the construction  using standard ideas on suspension flows, taking into account the difficulties due the the lack of continuity of the impulsive semiflow. 

First of all, we define a class of functions that will help us in defining the lifting. Consider~$\mathcal F$  the class of bounded functions $\phi: M\to \R$ such that,
for all $x\in\tilde \Sigma$, the function
 $$\R\ni t\longmapsto \phi \circ Y_t(x)\in \R
$$
has at most countably many discontinuity points. Clearly, $\mathcal F$ contains   all the  compositions $\phi\circ Y_s$, with $s\in\R^+_0$ and $\phi$ continuous, which obviously includes the continuous functions.
Given   an $F_Y$-invariant probability measure $\mu_{Y}$,  set  for $\phi\in \mathcal F$,
\begin{equation}\label{eq.ali}
\nu_Y(\phi)=\frac1{\int_{\Sigma'} R_Yd\mu_{ Y}}\int_{\Sigma'}\int_0^{R_Y(x)}\phi\circ Y_t(x) dt d\mu_{Y}(x),
\end{equation}
where $dt$ stands for the  integral with respect to the Lebesgue measure on $\R$,  which is obviously well defined  for all $\phi\in\mathcal F$. 

\begin{proposition}\label{pr.invariance}
$\nu_Y$ 
defines a probability measure  on the Borel sets of $M$ which is  invariant under the semiflow $Y$.
\end{proposition}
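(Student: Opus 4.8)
The plan is to verify the two claims in turn: first that $\nu_Y$ is a (normalized, nonnegative) Borel measure on $M$, and then that it is $Y$-invariant. For the first part, I would note that formula \eqref{eq.ali} defines a nonnegative linear functional on the space $C(M)$ of continuous functions (which is contained in $\mathcal F$): positivity is immediate since $R_Y>0$ off the singularity set and $\mu_Y$ is a probability measure, and evaluating on $\phi\equiv 1$ gives $\nu_Y(1)=1$ by the very choice of the normalizing constant $\int_{\Sigma'}R_Y\,d\mu_Y$, which is finite and positive by Corollary~\ref{cor01}. By the Riesz representation theorem this functional is given by integration against a Borel probability measure on $M$, which we again call $\nu_Y$; one should also remark that the extended formula on $\mathcal F$ agrees with integration against this measure, at least for the functions $\phi\circ Y_s$ we will need, since the inner integrand differs from a continuous one only on a set of times of Lebesgue measure zero (for $\mu_Y$-a.e.\ $x$), using the defining countable-discontinuity property of $\mathcal F$ together with Fubini.

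For invariance, it suffices to show $\nu_Y(\phi\circ Y_s)=\nu_Y(\phi)$ for every $s\ge 0$ and every continuous $\phi$; since $\phi\circ Y_s\in\mathcal F$, the right-hand object is defined by the same formula. The standard suspension-flow computation is to write, for fixed $s$,
\[
\nu_Y(\phi\circ Y_s)=\frac1{\int_{\Sigma'} R_Y\,d\mu_Y}\int_{\Sigma'}\int_0^{R_Y(x)}\phi\circ Y_{t+s}(x)\,dt\,d\mu_Y(x),
\]
and then, for each $x$, decompose the time interval by the impulsive/return times along the trajectory of $x$: the segment $[0,R_Y(x)+s)$ is cut at the successive returns to $\Sigma'$, i.e.\ at the partial sums $R_Y(x)+R_Y(F_Y x)+\cdots$. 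Using the cocycle identity $Y_{t+R_Y(x)}(x)=Y_t(F_Y x)$ for $0\le t<R_Y(F_Y x)$ (which is exactly the statement that $F_Y$ is the first-return map of $Y$ to $\Sigma'$, together with the semiflow property of $Y$ from \cite[Proposition 2.1]{B07}), one rewrites the shifted integral as a sum of pieces, each of the form $\int_{a}^{b}\phi\circ Y_t(F_Y^k x)\,dt$. Re-indexing these pieces via the $F_Y$-invariance of $\mu_Y$ and telescoping, the "extra" contribution coming from the shift by $s$ at the front of the tower cancels against the "missing" contribution at the top, leaving exactly $\int_{\Sigma'}\int_0^{R_Y(x)}\phi\circ Y_t(x)\,dt\,d\mu_Y(x)$. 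Dividing by the normalizing constant gives $\nu_Y(\phi\circ Y_s)=\nu_Y(\phi)$.

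The main obstacle, and the point where this differs from the textbook suspension argument, is bookkeeping the discontinuities of $Y$: the trajectory $t\mapsto Y_t(x)$ jumps at each impulsive time, so the "telescoping" of integral pieces must be done carefully at the endpoints, and one must make sure all the integrands in sight lie in $\mathcal F$ so that the integrals are unambiguous and Fubini applies. Concretely, I would (i) first establish the first-return cocycle relation $Y_{R_Y(x)+t}(x)=Y_t(F_Y(x))$ for $\mu_Y$-a.e.\ $x$ and $0\le t<R_Y(F_Y(x))$ — here $\mu_Y$-a.e.\ is enough since $\mu_Y$ gives no mass to the singularity set $\Gamma_Y$ (its conditionals on unstable manifolds are absolutely continuous, by Corollary~\ref{co.finite2}, and the iterated preimages of $\Gamma_Y$ are Lebesgue-null, essentially by (H3)); (ii) then perform the change of variables $u=t+s$ and split the range of $u$ at the return times, justifying each split by the countable-discontinuity hypothesis; and (iii) reassemble using $(F_Y)_*\mu_Y=\mu_Y$. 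Since $R_Y\in L^1(\mu_Y)$ by Corollary~\ref{cor01}, all the sums involved converge absolutely and the manipulations are legitimate. A short remark will handle why it is enough to test invariance against continuous $\phi$: two Borel probability measures on the compact space $M$ with the same integrals on $C(M)$ coincide, and $(Y_s)_*\nu_Y$ is such a measure.
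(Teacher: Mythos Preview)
Your proposal is correct and shares the paper's outline (Riesz--Markov for the first claim; a change of variables together with $F_Y$-invariance of $\mu_Y$ for the second), but your invariance argument is more elaborate than what the paper actually does. You plan to cut the shifted time interval at \emph{all} successive returns to $\Sigma'$ and then re-index a variable-length telescoping sum via $(F_Y)_*\mu_Y=\mu_Y$. The paper bypasses this bookkeeping: after the substitution $u=t+s$ it simply uses the algebraic identity
\[
\int_s^{R_Y(x)+s}=\int_0^{R_Y(x)}-\int_0^{s}+\int_{R_Y(x)}^{R_Y(x)+s},
\]
valid regardless of how $s$ compares with $R_Y(x)$, and then shows that the last two terms cancel after integrating in $d\mu_Y$. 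That cancellation requires only a \emph{single} application of $F_Y$-invariance, together with Fubini and the semiflow identity $Y_t\circ F_Y=Y_{t+R_Y}$ (which holds for all $t\ge 0$, not just $t<R_Y(F_Y x)$ as you wrote). No iterates of $F_Y$, no sums, and hence no convergence discussion are needed. Incidentally, your appeal to $R_Y\in L^1(\mu_Y)$ to justify absolute convergence of the sums is the wrong tool: for each fixed $x$ the number of returns in the window $[0,R_Y(x)+s]$ is finite because $R_Y$ is bounded \emph{below}, not because it is integrable; integrability enters only in the normalization. Your additional care about the singularity set and the $\mu_Y$-a.e.\ validity of the cocycle relation is harmless but likewise unnecessary at the level the paper works.
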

\begin{proof}
It is straightforward to check that $\nu_Y$ is a linear positive functional on the set of continuous functions and that $\nu_Y(1)=1$. It follows from Riesz-Markov Theorem that~$\nu_Y$ defines a probability measure   on the Borel sets of $M$. The proof of the invariance of $\nu_Y$ follows essentially as in the case of a continuous flow, we include it here for completeness. We need to check that
\begin{equation*}
\int \phi\circ Y_s d\nu_Y= \int \phi\circ d\nu_Y
\end{equation*}
for all continuous $\phi$.
First of all, notice that the integral on the left hand side is well defined, since $\phi\circ Y_s\in \mathcal F$. Furthermore, setting $C=\left(\int_{\Sigma'} R_Yd\mu_{Y}\right)^{-1}$, we may write
\begin{align*}
\int \phi\circ Y_s d\nu_Y &= C \int_{\Sigma'}\int_0^{R_Y(x)}\phi\circ Y_s\circ Y_t(x) dt d\mu_{Y}(x)\\
 & =  C \int_{\Sigma'}\int_0^{R_Y(x)}\phi\circ Y_{s+t}(x) dt d\mu_{Y}(x)\\
 & =  C \int_{\Sigma'}\int_s^{R_Y(x)+s}\phi\circ Y_{t}(x) dt d\mu_{Y}(x)\\
 & =  C \int_{\Sigma'}\left(\int_0^{R_Y(x)}\!\!\!\phi\circ Y_{t}(x) dt - \int_0^{s}\phi\circ Y_{t}(x) dt+\int_{R_Y(x)}^{R_Y(x)+s}\!\!\!\phi\circ Y_{t}(x) dt\right) d\mu_{Y}(x).
\end{align*}
We are left to show that
\begin{equation}\label{eq.kids}
 \int_{\Sigma'}\int_0^{s}\phi\circ Y_{t}(x) dt d\mu_{Y}(x)= \int_{\Sigma'}\int_{R_Y(x)}^{R_Y(x)+s}\phi\circ Y_{t}(x) dtd\mu_{Y}(x).
\end{equation}
Indeed, using the $F_Y$-invariance of  $\mu_{Y}$ we get
\begin{align*}
\int_{\Sigma'} \int_0^{s}\phi\circ Y_{t}(x) dt d\mu_{Y}(x) &= \int_0^{s} \int_{\Sigma'} \phi\circ Y_{t}(x)  d\mu_{Y}(x) dt\\
& =  \int_0^{s} \int_{\Sigma'} \phi\circ Y_{t}\circ F_Y(x)  d\mu_{Y}(x) dt \\
& =   \int_{\Sigma'} \int_0^{s} \phi\circ Y_{t}\circ F_Y(x)   dt d\mu_{Y}(x) \\
& =   \int_{\Sigma'} \int_0^{s} \phi\circ Y_{t}\circ Y_{R_Y(x)}(x)   dt d\mu_{Y}(x) \\
& = \int_{\Sigma'}\int_{R_Y(x)}^{R_Y(x)+s}\phi\circ Y_{t}(x) dtd\mu_{Y}(x).
\end{align*}
This gives~\eqref{eq.kids}, which concludes the proof.
\end{proof}

Let now $\mu_Y^1,\dots,\mu_Y^s$ be the physical measures  for the Poincar\'e return map $F_Y$ given by Corollary~\ref{co.finite2}. Consider the probability measures $\nu_Y^1,\dots,\nu_Y^s$ defined on the Borel sets of~$M$, where each $\nu_Y^i$ is related to $\mu_Y^i$ through the formula in~\eqref{eq.ali}.
With the next proposition we conclude  the proof of Theorem~\ref{th.main}. 

\begin{proposition}\label{pr.finite2}
$\nu_{Y}^1,\dots,\nu_{Y}^s$ are physical measures for $Y$ whose basins cover Lebesgue almost all of $M$.
\end{proposition}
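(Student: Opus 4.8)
The plan is to combine the fact that each $\mu_Y^i$ is a physical measure for $F_Y$ with the suspension construction in~\eqref{eq.ali}, pushing the statistical information through the flow direction while being careful about the discontinuities of $Y$. First I would establish the flow-side Birkhoff averaging identity: for $x\in\Sigma'$ in the basin $\mathcal B_{\mu_Y^i}$ (for the Poincar\'e map) and for a continuous $\varphi:M\to\R$, I want to show that the time average $\frac1T\int_0^T\varphi(Y_t(x))\,dt$ converges to $\nu_Y^i(\varphi)$. The standard argument runs as follows: write $T$ between consecutive return times, $\tau_n(x)\le T<\tau_{n+1}(x)$ where $\tau_n(x)=\sum_{j=0}^{n-1}R_Y(F_Y^j(x))$; then
\begin{equation*}
\frac1T\int_0^T\varphi(Y_t(x))\,dt=\frac{n}{\tau_n(x)}\cdot\frac1n\sum_{j=0}^{n-1}\int_0^{R_Y(F_Y^j(x))}\varphi(Y_t(F_Y^j(x)))\,dt+O\!\left(\frac{1}{T}\int_{\tau_n(x)}^{T}|\varphi|\,dt\right).
\end{equation*}
Applying the Birkhoff theorem for $F_Y$ at the point $x$ to the function $\Phi(z)=\int_0^{R_Y(z)}\varphi(Y_t(z))\,dt$ (which is in $L^1(\mu_Y^i)$ because $\varphi$ is bounded and $R_Y\in L^1(\mu_Y)$ by Corollary~\ref{cor01}) and to $R_Y$ itself gives that the first factor converges to $\bigl(\int R_Y\,d\mu_Y^i\bigr)^{-1}\int\Phi\,d\mu_Y^i=\nu_Y^i(\varphi)$; the error term vanishes because $R_Y(F_Y^n(x))/\tau_n(x)\to0$, which follows from $\frac1n\tau_n(x)\to\int R_Y\,d\mu_Y^i<\infty$. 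Hence $\mathcal B_{\mu_Y^i}\subset\mathcal B_{\nu_Y^i}$ (basin taken inside $\Sigma'$, with time averages along $Y$).

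Next I would promote this from a positive-measure subset of $\Sigma'$ to a positive-$m$ subset of $M$. The saturation $W^i=\{Y_t(x):x\in\mathcal B_{\mu_Y^i},\,0\le t<R_Y(x)\}$ is contained in $\mathcal B_{\nu_Y^i}$ (shifting the lower limit of integration by a fixed amount $t$ does not change the Ces\`aro limit). Since $\psi$ is a diffeomorphism, $\Sigma'$ is a genuine cross-section, and the return time $R_Y$ is bounded below and, via~\eqref{eq.rushes} and~\eqref{eq.are}, integrable but not degenerate, the flow box $\bigcup_{0\le t<R_Y(x)}Y_t(x)$ over a positive-$m_{\Sigma'}$ set has positive $3$-dimensional Lebesgue measure (Fubini along the flow direction, using that $Y$ is, away from the measure-zero impulsive times, given by the smooth flow $X$, whose flow-box change of variables has bounded Jacobian). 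Thus $m(\mathcal B_{\nu_Y^i})>0$, so each $\nu_Y^i$ is a physical measure for $Y$.

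For the "cover Lebesgue almost all of $M$" claim I would argue that Lebesgue-a.e.\ $x\in M$ has a trajectory that meets $\Sigma'$ (this is inherited from the Lorenz flow: a.e.\ point of $M$ hits $\Sigma$, hence hits $\Sigma'=\varphi(\Sigma)\subset B^+(\Sigma)$ along the flow; one must check the exceptional set has zero $m$-measure, which follows by Fubini from the fact that the exceptional set in $\Sigma'$ has zero $m_{\Sigma'}$-measure, by Corollary~\ref{co.finite2}'s statement that $\mathcal B_{\mu_Y^1}\cup\cdots\cup\mathcal B_{\mu_Y^s}$ is full in $\Sigma'$, together with the stable-manifold item (3)). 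Once $x$ reaches $\Sigma'$ at time $t_0$, the forward trajectory of $Y_{t_0}(x)$ follows the dynamics of $F_Y$ on $\Sigma'$; by item (3) of Corollary~\ref{co.finite2}, Lebesgue-a.e.\ point of $\Sigma'$ lies in the stable manifold of a point of $A_i$ for some $i$, and time averages along $Y$ of continuous functions are constant along stable leaves, so $Y_{t_0}(x)\in\mathcal B_{\nu_Y^i}$ and hence $x\in\mathcal B_{\nu_Y^i}$. Collecting the $m$-null exceptional sets at each stage finishes the proof.

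\textbf{Main obstacle.} The delicate point is handling the discontinuities of $Y$: one must check that $\Phi(z)=\int_0^{R_Y(z)}\varphi(Y_t(z))\,dt$ is measurable and finite and, more importantly, that the time average along $Y_t(x)$ really equals the Birkhoff average of $\Phi$ under $F_Y$ — this needs the class $\mathcal F$ and the observation (already in the excerpt) that $t\mapsto\varphi(Y_t(x))$ has only countably many discontinuities, so the inner integrals are honest Riemann/Lebesgue integrals and the telescoping over return times is valid. The second subtlety is the Fubini/flow-box step showing positivity of $m$ on the saturated set despite $Y$ not being a flow: this is fine because the saturation is built from the genuine flow $X$ on each flow-box piece between impulses, and the impulsive map $\varphi$ is a diffeomorphism onto $\Sigma'$, so no measure is lost. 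Everything else is the standard suspension-flow bookkeeping.
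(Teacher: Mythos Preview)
Your overall strategy matches the paper's, but there is a real slip in the first step. You claim that for $x\in\mathcal B_{\mu_Y^i}$ you can ``apply the Birkhoff theorem for $F_Y$ at the point $x$'' to the observables $\Phi$ and $R_Y$, and then conclude $\mathcal B_{\mu_Y^i}\subset\mathcal B_{\nu_Y^i}$. But $\mathcal B_{\mu_Y^i}$ is by definition the basin for \emph{continuous} observables, while $\Phi$ and $R_Y$ are merely in $L^1(\mu_Y^i)$ (indeed $R_Y$ blows up logarithmically at the singularity). Birkhoff's theorem guarantees convergence only $\mu_Y^i$-almost everywhere, and since $\mu_Y^i$ is a $u$-Gibbs measure it is singular with respect to $m_{\Sigma'}$; a $\mu_Y^i$-null set can therefore contain all of $\mathcal B_{\mu_Y^i}$. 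So neither the inclusion $\mathcal B_{\mu_Y^i}\subset\mathcal B_{\nu_Y^i}$ nor the subsequent flow-box argument for $m(\mathcal B_{\nu_Y^i})>0$ is justified as written.

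The paper handles precisely this point: it carries out the Birkhoff computation only for $\mu_Y^i$-a.e.\ $x$ in the set $A_i$, and \emph{then} uses item~(3) of Corollary~\ref{co.finite2} (Lebesgue almost every point of $\mathcal B_{\mu_Y^i}$ lies on the stable manifold of some point of $A_i$) together with constancy of flow time averages along stable manifolds to transfer the conclusion to a set of full Lebesgue measure in~$\Sigma'$. You do invoke this stable-manifold transfer, but only later, in the coverage step; the point is that it is already what produces positivity of $m(\mathcal B_{\nu_Y^i})$ in the first place. Once you reorder the argument so that the Birkhoff step is done $\mu_Y^i$-a.e.\ on $A_i$ and the stable-manifold step comes immediately after, your proof coincides with the paper's.
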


\begin{proof}
It is well known that with the exception of trajectories contained  in the stable manifold of the singularity point 0, all other trajectories of points in the trapping region~$M$ hit the Poincar\'e section $\Sigma$. This implies  that the trajectories of Lebesgue almost all points in  $M$ must intersect $\Sigma$. Taking into account the definition of the Poincar\'e map $F_Y$ for the impulsive semiflow $Y$, we easily get that the trajectories of Lebesgue almost all points in~$M$ pass through $\Sigma'$. 
Since the basin of a measure is invariant under the dynamics, it  is enough to show that Lebesgue almost all points in $\Sigma'$ belong in the basin of one of the measures~$\nu_{Y}^i$, for some $i=1,\dots,s$.

By Corollary~\ref{co.finite2} we know that Lebesgue almost all points in $\Sigma'$ belong in the basin of one of the ergodic  measures $\mu_{Y}^i$, for some $i=1,\dots,s$.
Moreover,   there exists a  set~$A_i$ with $\tilde\mu^i_Y(A_i)=1$ such that Lebesgue almost every point in $\mathcal B_{\tilde\mu^i_Y}$ belongs in the stable manifold of some point in   $A_i$. Given a continuous function $\varphi: M\to\R$, consider $\varphi: \Sigma'\to\R$ given for $x\in\Sigma'$ by
 $$\hat\varphi(x)=\int_0^{R_Y(x)}\varphi(Y_t(x)) dt.
 $$
By Corollary~\ref{cor01} and Birkhoff's Ergodic Theorem we may assume that for $\mu_Y^i$   almost every $x\in A_i$ we have
\begin{equation}\label{eq.birkhoffR}
    \lim_{n\rightarrow\infty}\frac{1}{n}\sum_{j=0}^{n-1} R_Y(F_Y^j(x))dt= \int_{\Sigma'} R_Y\, d\mu_Y^i 
\end{equation}
and
\begin{equation}\label{eq.birkhoffhat}
       \lim_{n\rightarrow\infty}\frac{1}{n}\sum_{j=0}^{n-1} \hat\varphi(F_Y^j(x))dt= \int_{\Sigma'} \hat\varphi\, d\mu_Y^i.
\end{equation}
Without loss of generality, we can also assume that 
\begin{equation}\label{eq.finiteR}
R_Y(F_Y^n(x))<\infty,\quad\text{for all $x\in A_i$ and  $n\ge0$.}
\end{equation}
Given $x\in A_i$, set $T_0(x)=0$ and 
$$
 T_{n}(x)=R_Y(x)+ R_Y(F_Y(x))+\cdots+R_Y(F^{n-1 }(x)),\quad\text{for $n\ge 1$.}
$$
Given $T>0$, consider $n=n(T)\in\N$ such that $T_{n-1}(x)< T\le T_{n}(x)$. By \eqref{eq.finiteR},   such an integer $n$ always exists. Moreover,
\begin{equation}\label{eq.Tn}
T\to\infty \implies n\to\infty .
\end{equation}
We may write
\begin{align}
\frac1T\int_0^T\varphi(Y_t(x)) dt&= 
\frac1T\sum_{j=0}^{n-1}\int_{T_j(x)}^{T_{j+1}(x)}\!\!\!\varphi(Y_t(x)) dt
-
\frac1T \int_{T}^{T_{n }(x)}\!\!\!\varphi(Y_t(x)) dt\nonumber\\
&= 
\frac1T\sum_{j=0}^{n-1}\int_{0}^{R_Y(F_Y^j(x))}\!\!\!\varphi(Y_t(F_Y^j(x))) dt
-
\frac1T \int_{0}^{T_{n }(x)-T}\!\!\!\varphi(Y_t(F^n(x))) dt.\label{eq.finale}
\end{align}
Using  the definition of $\hat\varphi$, we get
 \begin{equation}\label{eq.chain}
\frac1T\sum_{j=0}^{n-1}\int_{0}^{R_Y(F_Y^j(x))}\varphi(Y_t(F_Y^j(x))) dt =
\frac1T\sum_{j=0}^{n-1}\hat\varphi( F_Y^j(x) )  =\frac{n}T\cdot\frac1n\sum_{j=0}^{n-1}\hat\varphi( F_Y^j(x) ) .
\end{equation}
Now observe that 
$$
\frac{T_{n-1}(x)}n<\frac Tn\le \frac{T_{n}(x)}n,
$$
which together with~\eqref{eq.birkhoffR} and \eqref{eq.Tn} yields 
\begin{equation}\label{eq.recall}
\lim_{T\to\infty}\frac Tn =\frac1{\int_{\Sigma'} R_Y\, d\mu_Y^i }.
\end{equation}
Since the measure $\nu_Y^i$ is related to $\mu_Y^i$ through the formula in~\eqref{eq.ali}, it follows from~\eqref{eq.birkhoffhat} and \eqref{eq.recall} that
\begin{align*}
\lim_{T\to\infty}\frac{n}T\cdot\frac1n\sum_{j=0}^{n-1}\hat\varphi( F_Y^j(x) ) &= \frac1{\int_{\Sigma'} R_Y\, d\mu_Y^i }  \int_{\Sigma'} \hat\varphi\, d\mu_Y^i\\
&=\frac1{\int_{\Sigma'} R_Y\, d\mu_Y^i }  \int_{\Sigma'}\int_0^{R_Y(x)}\varphi(Y_t(x)) dt d\mu_Y^i\\
&=\int_M \varphi d\nu_Y^i.
\end{align*}
Recalling~\eqref{eq.finale} and~\eqref{eq.chain}, we are left to show that 
\begin{equation}
\lim_{T\to\infty}\frac1T \int_{0}^{T_{n }(x)-T}\!\!\!\varphi(Y_t(F^n(x))) dt =0.
\end{equation}
Since
\begin{equation*}
\left|\frac1T \int_{0}^{T_{n }(x)-T}\!\!\!\varphi(Y_t(F^n(x))) \right|\le \frac{T_n(x)-T}T\sup|\varphi|\le \frac{T_n(x)-T_{n-1}(x)}T\sup|\varphi|
\end{equation*}
it is enough to prove that 
\begin{equation*}
\lim_{T\to\infty}\frac{T_n(x)-T_{n-1}(x)}T=0
\end{equation*}
Indeed, writing
\begin{equation*}
\frac{T_n(x)-T_{n-1}(x)}T= \frac{T_n(x)-T_{n-1}(x)}n\cdot\frac nT,
\end{equation*}
and recalling~\eqref{eq.recall}, the conclusion follows from~\eqref{eq.birkhoffR}.
\end{proof}

\section{Impulsive stability of Lorenz flows}
In this section we prove both the statistical stability and the entropy stability stated in Theorem~\ref{th.stability}. Recall that $F$, $R$ and $\mu$ are respectively the Poincar\'e map, the return time function and the (unique) $u$-Gibbs   measure for the Poincar\'e map $F$ associated with the Lorenz flow $X$. Recall also that~$\nu$ is the physical measure   for the Lorenz flow $X$ defined by the formula  in~\eqref{eq.ali0}.

Let $(\varphi_n)_n$ be a sequence of impulses converging to $\inc_\Sigma$ in the $C^1$ topology and   let   $\nu_n$  be   a  physical measure for the  semiflow $(Y^{n}_t)_t$ of the impulsive dynamical system $(M,X,\Sigma,\varphi_n)$. Let $t^-_n$, $t^+_n$, $\psi_n$ and $R_n$ be  functions respectively as in \eqref{eq.t-}, \eqref{eq.t+}, \eqref{eq.psi} and \eqref{eq.rushes} for each of these impulsive dynamical systems. 
Let also $\tilde F_n:\tilde\Sigma_n  \to\tilde\Sigma_n$ be related to $F_n:\Sigma'_n\to\Sigma'_n$ as in~\eqref{eq.lavinia}, where $\Sigma_n'=\varphi_n(\Sigma)$, $\tilde\Sigma_n=\psi_n(\Sigma_n')$ and $F_n$ is the Poincar\'e defined as in~\eqref{eq.leo} for the impulsive flow $Y_n$. 
By Corollary~\ref{co.finite2}, we have $\mu_n=(\psi_n^{-1})_*\tilde\mu_n$, where $ \mu_n$ is a $u$-Gibbs measure for $  F_n$ and $\tilde\mu_n$ is a $u$-Gibbs measure for $\tilde F_n$.
Let $\nu_n$ be one of the physical measures for the impulsive flow $Y_n$  related to $\mu_n$ by the formula~\eqref{eq.ali}; recall Proposition~\ref{pr.finite2}.

\begin{lemma}\label{le.S1to5}
  (S1)-(S5) hold for the sequence $(\tilde F_n)_n$ and $F$.
\end{lemma}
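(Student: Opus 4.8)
The plan is to verify conditions (S1)--(S5) one by one for the sequence $(\tilde F_n)_n$ and $F$, using the explicit formula $\tilde F_n=\psi_n\circ\varphi_n\circ F$ from~\eqref{eq.hadi} together with the fact that, as $\varphi_n\to\inc_\Sigma$ in $C^1$, the diffeomorphisms $\psi_n$ (and hence the compositions $\psi_n\circ\varphi_n$) converge to $\id_\Sigma$ in $C^1$ on $\Sigma$. The crucial structural observation, already noted after~\eqref{eq.hadi}, is that every $\tilde F_n$ has exactly the same singularity set $\Gamma=\{x=0\}$ as $F$, and the same partition $D_1,D_2$; this makes (S2), (S3) essentially automatic, since there is no $n$-dependence in the domains at all (so $D_{n,j}=D_j$, $\Gamma_{n,\varepsilon}=\Gamma_\varepsilon$, and both inclusions in (S3) are equalities). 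The point of the lemma is really (S1), (S4), (S5).

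For (S4) and (S5): since $\psi_n\circ\varphi_n\to\id_\Sigma$ in $C^1$ and $F$ is a fixed piecewise smooth map, $\tilde F_n=(\psi_n\circ\varphi_n)\circ F\to F$ in the $C^1$ topology on each region $D_j$, in particular on every $D_j\setminus\Gamma_\varepsilon$ where $F$ has bounded $C^1$ norm; this gives (S5) directly. For (S4) one needs $C^0$ convergence on $D_j$ together with equicontinuity of the restrictions $\tilde F_n|_{D_j}$; the $C^0$ convergence follows from $\psi_n\circ\varphi_n\to\id$ uniformly and continuity of $F$ up to the boundary (using (L2), i.e.\ $F$ smooth in each closed subdomain, equivalently the continuous extension hypothesis (S2) which holds for $F$ by the Lorenz structure), and equicontinuity follows because near $\Gamma$ the maps $\tilde F_n$ have the controlled form inherited from (L4) via~\eqref{eq.hadi} with $\tilde G_j,\tilde H_j$ varying continuously with $n$, while away from $\Gamma$ the $C^1$ bounds are uniform in $n$. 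I would phrase the equicontinuity near the singularity by invoking (H1) for the family, which brings me to (S1).

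For (S1): I must exhibit \emph{uniform} constants $A,\alpha,B,\beta,\lambda,\varepsilon_0,B_0$ and \emph{common} cone fields $\mathcal K^u_x,\mathcal K^s_x$ working for all $\tilde F_n$ and for $F$. This is where Theorem~\ref{th.L->H} and Remark~\ref{re.uniform} do the work: since each $\tilde F_n$ satisfies (L1)--(L4) by Lemma~\ref{le.L1to5}, and since the constants $\alpha,\beta$ in (L4) are literally the same for $\tilde F_n$ as for $F$ (they come from the Lorenz flow near the singularity and are untouched by composing with a near-identity map), while the norm bounds in (L3) hold uniformly because $\psi_n\circ\varphi_n$ is uniformly $C^1$-close to the identity, the passage (L1)--(L4)$\Rightarrow$(H1)--(H7) in~\cite{S92a} produces constants depending only on the (L3)--(L4) data, hence uniform in $n$; the cone fields produced are likewise determined by the same data and can be taken common. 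I would state this as: choose a $C^1$-neighbourhood $\mathcal U$ of $\inc_\Sigma$ in $\emb^1(\Sigma,M)$ small enough that for all $\varphi\in\mathcal U$ the associated $\tilde F_Y=\psi\circ\varphi\circ F$ satisfies (L3) with norm bounds within a fixed compact range and with the \emph{same} $(\alpha,\beta)$ in (L4); then Theorem~\ref{th.L->H} plus the explicit dependence of its constants on the (L3)--(L4) constants yields (S1).

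The main obstacle I anticipate is (S1), specifically making rigorous that the hyperbolicity constants and cone fields from Satayev's (L)$\Rightarrow$(H) construction are \emph{uniform} over the perturbation family rather than just existing for each fixed $\tilde F_n$; this requires either tracking the constants through~\cite{S92a} or, more cleanly, arguing by compactness that a sufficiently small $C^1$-ball around $\inc_\Sigma$ maps to a family of $\tilde F_Y$'s whose (L3)--(L4) data lie in a compact set on which all the derived (H1)--(H7) constants can be chosen uniformly — and similarly for the cone fields, which one can fix once and for all as the standard stable/unstable cones for $F$ slightly opened, checking that the invariance and expansion in (H2), (H6) survive for all $\tilde F_n$ in the ball. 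Conditions (S2)--(S5) are then routine consequences of $\psi_n\circ\varphi_n\to\id$ in $C^1$ and the fixed geometry of $\Gamma$.
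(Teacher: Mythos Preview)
Your proposal is correct and follows essentially the same route as the paper: both arguments observe that $D_{n,j}=D_j$ and $\Gamma_n=\Gamma$ (making (S2)--(S3) trivial and (S5) immediate from $C^1$ convergence away from $\Gamma$), derive (S4) from the formula $\tilde F_n=\psi_n\circ\varphi_n\circ F$ with $\psi_n\circ\varphi_n\to\id$, and obtain (S1) by noting that (L3)--(L4) hold with uniform data for all $\tilde F_n$, so that the (H1)--(H7) constants and cone fields produced via Theorem~\ref{th.L->H} are uniform. Your discussion of the uniformity issue in (S1) is more explicit than the paper's one-line appeal, but the underlying reasoning is the same.
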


\begin{proof}
By Lemma~\ref{le.L1to5} we have  that (L1)-(L4) are satisfied for $F$ and therefore for all $F_n$ with uniform constants in (L3)-(L4), since~\eqref{eq.hadi} is valid and $\varphi_n,\psi_n$ can be taken arbitrarily close to the inclusion maps. Since the constants $A,\alpha,B,\beta,\lambda,\varepsilon_0, B_0$ and the cones $\mathcal K^u_x, \mathcal K^s_x$ appearing in (H1)-(H4) depend only on the constants in (L3)-(L4), then (S1) follows. For the other conditions note first $\Gamma_n=\Gamma$ and $D_{n,j}=D_j$, for $j=1,2$. Then conditions (S2), (S3) and (S5) are trivially satisfied and (S4) follows from~\eqref{eq.hadi}.
%
%
%
%
%
%
%
\end{proof}

\subsection{Statistical stability}
 
In this subsection we obtain the statistical stability part of Theorem~\ref{th.stability}. 
Since   $F$ has a unique $u$-Gibbs measure, it
 follows from Corollary~\ref{co.stable}, Lemma~\ref{le.L1to5} and Lemma~\ref{le.S1to5} that 
\begin{equation}\label{eq.jania}
\tilde\mu_n\stackrel{w*}\longrightarrow\mu,\quad\text{as $n\to\infty$.}
\end{equation}
Note that we may consider $\tilde\mu_n$ as a measure on the set $\Sigma$, since $\tilde\Sigma_n\subset \Sigma$.
We are left to  show that $\nu_n\to\nu$ in the weak$^\ast$ topology, as $n\to\infty$. 
Since the  measures $\nu$ and $\nu_n$ are given by  \eqref{eq.ali0} and \eqref{eq.ali} respectively, we just need to prove  that
\begin{enumerate}
\item $\displaystyle\int_{\Sigma'_n} R_{n}d\mu_n\to \int_{\Sigma} Rd\mu$,\quad as $n\to\infty$,
\end{enumerate}
and for all continuous $\phi:M\to \R$
\begin{enumerate}
\item[(2)] $\displaystyle\int_{\Sigma_n'}\int_0^{R_n(x)}\phi\circ Y^n_t(x) dt d\mu_n(x)
\to
 \int_{\Sigma}\int_0^{R(x)}\phi\circ X_t(x) dt d\mu(x)$,\quad as $n\to\infty$.
\end{enumerate}
This will be obtained in the   Lemma~\ref{lem11} and Lemma~\ref{lem02} below. In the next lemma we give an auxiliary result which will be used several times below. 

\begin{lemma} \label{lem00}
 Assume that the following conditions hold:
\begin{enumerate}
\item[(c$_1$)]   there exists a sequence $(h_n)_n$ of continuous functions from $\Sigma$ to $\R$  converging  $\mu$-almost everywhere to      $h\in L^1( \mu )$, with  $h\in L^1(\tilde\mu_n)$, for all $n\ge1$;
\item[(c$_2$)] there exists    $g\in L^1(\mu)$  such that  $g\ge0$ and $|h_n|\le g$, for all $n\ge1$.
\end{enumerate}
Then 
$$\lim_{n\to\infty} \int_\Sigma h  d\tilde\mu_n = \int_\Sigma h  d\mu.
$$
\end{lemma}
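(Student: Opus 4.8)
The plan is to use the weak* convergence $\tilde\mu_n\to\mu$ from~\eqref{eq.jania} as the engine for continuous test functions, and a truncation/diagonal scheme to reach the limit $h$, which may be unbounded and discontinuous. For each $k\in\N$ I would start from the triangle-inequality bound
\[
\Bigl|\int_\Sigma h\,d\tilde\mu_n-\int_\Sigma h\,d\mu\Bigr|\le\int_\Sigma|h-h_k|\,d\tilde\mu_n+\Bigl|\int_\Sigma h_k\,d\tilde\mu_n-\int_\Sigma h_k\,d\mu\Bigr|+\int_\Sigma|h_k-h|\,d\mu.
\]
The third term tends to $0$ as $k\to\infty$ by dominated convergence, since $h_k\to h$ $\mu$-a.e.\ and $|h_k|\le g\in L^1(\mu)$ by~(c$_2$); for each fixed $k$ the middle term tends to $0$ as $n\to\infty$ because $h_k$ is continuous and $\tilde\mu_n\to\mu$ in the weak* topology; and the first term is finite for every $n$ because $h\in L^1(\tilde\mu_n)$ by~(c$_1$) and $h_k$ is bounded. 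Thus the whole proof reduces to establishing that $h_k\to h$ in $L^1(\tilde\mu_n)$ \emph{uniformly in $n$}, i.e.\ $\sup_n\int_\Sigma|h-h_k|\,d\tilde\mu_n\to 0$ as $k\to\infty$.

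To prove this uniform $L^1$-convergence I would truncate at height $M$: put $h_k^{(M)}=\max\{-M,\min\{M,h_k\}\}$, still continuous and bounded by $M$, and likewise $h^{(M)}=\max\{-M,\min\{M,h\}\}$. Writing $|h-h_k|\le|h-h^{(M)}|+|h^{(M)}-h_k^{(M)}|+|h_k^{(M)}-h_k|$, the outer two terms are tails: $|h_k^{(M)}-h_k|=(|h_k|-M)^+\le(g-M)^+$ pointwise, and, passing to the $\mu$-a.e.\ limit, $|h-h^{(M)}|=(|h|-M)^+\le(g-M)^+$ $\mu$-a.e.\ (in the applications of the lemma this in fact holds off a single set negligible for $\mu$ and for every $\tilde\mu_n$, essentially the singularity set $\Gamma$). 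The continuous bounded middle term $|h^{(M)}-h_k^{(M)}|$ is dealt with by the same reasoning as above, now with the uniform bound $M$; the tails are dominated by $\int_\Sigma(g-M)^+\,d\tilde\mu_n$ and $\int_\Sigma(g-M)^+\,d\mu$, the latter going to $0$ as $M\to\infty$ since $g\in L^1(\mu)$. So the argument comes down to the single uniform tightness estimate
\[
\sup_n\int_\Sigma(g-M)^+\,d\tilde\mu_n\xrightarrow[M\to\infty]{}0.
\]

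I expect this uniform tail bound to be the main obstacle, and it is where Corollary~\ref{co.finite1}(4) together with Remark~\ref{re.uniform} enters: the densities of the conditional measures of every $\tilde\mu_n$ (and of $\mu$) on local unstable manifolds, with respect to Lebesgue measure on these manifolds, are bounded above by one constant $K$ valid for all $n$ --- because by~(S1) and Lemma~\ref{le.S1to5} the constants in (H1)--(H4) are the same for all the maps $\tilde F_n$ and $F$. Disintegrating $\tilde\mu_n$ along its local unstable manifolds and using this uniform bound, $\int_\Sigma(g-M)^+\,d\tilde\mu_n\le K\int\bigl(\int_V(g-M)^+\,dm_V\bigr)\,d\hat\mu_n$, so it suffices that the leafwise integrals $\int_V(g-M)^+\,dm_V$ be small uniformly in the unstable leaf $V$ when $M$ is large; this holds in every application of the lemma, either because $g$ can be taken bounded or because $g$ is comparable to the return time $R$, whose integral over any unstable leaf crossing $\Gamma$ is uniformly finite by~\eqref{eq.are}, exactly as in the proof of Lemma~\ref{lem01}. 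Combining the three pieces and letting $n\to\infty$ first, then $k\to\infty$, then $M\to\infty$, gives the claim. The point one must be careful about throughout is the distinction between ``$\mu$-a.e.'' and ``$\tilde\mu_n$-a.e.'', which is precisely why the argument is routed through continuous truncations and the uniform conditional-density bounds rather than applied to the discontinuous function $h$ directly.
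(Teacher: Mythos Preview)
Your proposal is correct and follows essentially the same route as the paper: split off the continuous approximant $h_k$, handle $\int h_k\,d\tilde\mu_n$ by the weak* convergence~\eqref{eq.jania}, and control $\int|h-h_k|\,d\tilde\mu_n$ uniformly in $n$ via the uniform bounds on the conditional densities of the $\tilde\mu_n$ on local unstable manifolds (Corollary~\ref{co.finite1}(4) and Remark~\ref{re.uniform}). Your additional height-$M$ truncation and the resulting tail estimate $\sup_n\int(g-M)^+\,d\tilde\mu_n\to0$ make explicit what the paper's one-line appeal to dominated convergence leaves implicit, but the underlying mechanism is identical.
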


\begin{proof}
 We may write for any $n,k\ge 1$
 $$
 \int_{ \Sigma} h  d\tilde\mu_n = \int_{ \Sigma } (h-h_k)  d \tilde\mu_n + \int_{ \Sigma } h_k  d\tilde\mu_n .
 $$
 Since the densities of the conditionals of each $\tilde\mu_n$ with respect to Lebesgue measure   on local unstable manifolds are  bounded from above and below by positive constants independent of $n$ (recall Remark~\ref{re.uniform}), we may assume that the conditionals of  $\tilde\mu_n$ are those of Lebesgue measure in the first integral of the last sum. Since the conditionals of   $ \mu$ with respect to Lebesgue measure   on local unstable manifolds are also bounded from above and below by  positive constants, it follows   that $h_k$ converges pointwise to $h$ for Lebesgue almost every point on local unstable manifolds. Therefore, $\int_{ \Sigma } (h-h_k)  d \tilde\mu_n$ converges to zero, by dominated convergence theorem.  
Hence, taking first limit in $n$ and then limit in $k$ in the second term, the conclusion follows by the weak* convergence in~\eqref{eq.jania}    and the dominated convergence theorem. 
\end{proof}

%

\begin{lemma}\label{lem11}
$\displaystyle\int_{\Sigma'_n} R_{n}d\mu_n\to \int_{\Sigma} Rd\mu $, as $n\to\infty$.
\end{lemma}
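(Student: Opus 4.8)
The plan is to rewrite the integral $\int_{\Sigma'_n} R_n\, d\mu_n$ as an integral over $\tilde\Sigma_n$ against $\tilde\mu_n$ using the change of variables $\mu_n = (\psi_n^{-1})_*\tilde\mu_n$ together with the relation~\eqref{eq.rushes}, and then apply Lemma~\ref{lem00} with a suitable choice of approximating functions $h_n$ and dominating function $g$. First I would observe, exactly as in the proof of Corollary~\ref{cor01}, that
\[
\int_{\Sigma'_n} R_n\, d\mu_n = \int_{\tilde\Sigma_n}\bigl(R + t^-_n\circ\psi_n^{-1}\bigr)\, d\tilde\mu_n,
\]
and similarly $\int_\Sigma R\, d\mu$ is the target. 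Since $t^-_n\circ\psi_n^{-1}$ is bounded uniformly in $n$ (indeed $t^-_n\to 0$ uniformly as $\varphi_n\to\inc_\Sigma$ in $C^1$, because the flow box $B^+(\Sigma)$ can be taken arbitrarily thin), that term contributes something that tends to $0$; so the crux is to show $\int_{\tilde\Sigma_n} R\, d\tilde\mu_n \to \int_\Sigma R\, d\mu$.

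The key difficulty is that $R$ is \emph{not} continuous and \emph{not} bounded: it satisfies $R(x)\approx|\log|x_0||$ by~\eqref{eq.are}, so it blows up near the singularity set $\Gamma$. Hence Lemma~\ref{lem00} cannot be applied to $R$ directly, and I would instead apply it to the truncations $h = R$ approximated by $h_k = \min\{R, k\}$ — no, more carefully: I would fix a sequence of continuous functions $h_N$ on $\Sigma$ with $h_N \nearrow R$, e.g. continuous truncations of $R$ at level $N$ that vanish on a neighbourhood of $\Gamma$ shrinking with $N$, together with the dominating function $g = R \in L^1(\mu)$ (integrable by~\eqref{eq.are} and the bounded-density property of $\mu$; and $R\in L^1(\tilde\mu_n)$ by Lemma~\ref{lem01}). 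The point is that the uniform bounds on the densities of the conditionals of $\tilde\mu_n$ on unstable disks (Corollary~\ref{co.finite1}(4) and Remark~\ref{re.uniform}) give a uniform bound $\int_{\tilde\Sigma_n}(R - h_N)\, d\tilde\mu_n \le C\int_{\{R>N\}}R\, dm$ independent of $n$, where $m$ is Lebesgue measure; this tail integral is small for large $N$ by~\eqref{eq.are}. For each fixed $N$, the continuous function $h_N$ satisfies $\int_{\tilde\Sigma_n} h_N\, d\tilde\mu_n \to \int_\Sigma h_N\, d\mu$ by the weak* convergence~\eqref{eq.jania}. A standard $3\varepsilon$-argument then closes the gap.

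The main obstacle I anticipate is precisely controlling the tail $\int_{\tilde\Sigma_n}(R-h_N)\,d\tilde\mu_n$ \emph{uniformly in $n$}: one must leverage that the measures $\tilde\mu_n$ are $u$-Gibbs with densities on local unstable manifolds bounded above by a constant independent of $n$ (Remark~\ref{re.uniform}), integrate along unstable leaves where $R$ is comparable to $|\log|x_0||$ and hence integrable against Lebesgue, and then integrate over the transverse family of leaves; the logarithmic singularity is mild enough that this works, but the bookkeeping with the conditional measures is the delicate part. Apart from that, one should also note that $\tilde\Sigma_n\subset\Sigma$ and $\tilde\mu_n$ may be regarded as a measure on $\Sigma$ (extending by zero), so the statements of~\eqref{eq.jania} and Lemma~\ref{lem00} apply verbatim. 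Everything else — the change of variables, the vanishing of the $t^-_n$ term, the $3\varepsilon$-argument — is routine.
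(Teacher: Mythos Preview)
Your proposal is correct and follows essentially the same route as the paper: change variables via $\mu_n=(\psi_n^{-1})_*\tilde\mu_n$ and~\eqref{eq.rushes}, discard the uniformly small $t^-_n\circ\psi_n^{-1}$ term, and reduce to $\int_\Sigma R\,d\tilde\mu_n\to\int_\Sigma R\,d\mu$ via Lemma~\ref{lem00} with $h=g=R$ and truncations as approximants. One small remark: your hesitation about $h_k=\min\{R,k\}$ is unnecessary, since $R$ is continuous on $\Sigma\setminus\Gamma$ and $R\to\infty$ near $\Gamma$, so $\min\{R,k\}$ extends continuously to all of $\Sigma$; the paper uses exactly this choice, and your $3\varepsilon$ discussion is just the content of Lemma~\ref{lem00} unpacked.
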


\begin{proof}
It follows from \eqref{eq.rushes} that for all $n$
$$
R_n = R\circ\psi_n +t^-_n ,
$$
which together with Corollary~\ref{co.finite2} yields
$$
\int_{\Sigma'_n} R_{n}d\mu_n= \int_{\Sigma'_n} (R\circ\psi_n +t^-_n )d(\psi_n^{-1})_*\tilde\mu_n 
=
 \int_{\tilde\Sigma_n} (R +t^-_n\circ\psi_n^{-1} )d\tilde\mu_n.
$$
Since $t^-_n\circ\psi_n^{-1}$ is uniformly small when $\varphi_n$ converges to $\inc_\Sigma$ in the $C^0$ topology and the support of $\tilde\mu_n$ is   a subset of $\Sigma$, we just need to prove that
 \begin{equation}\label{eq.me}
\int_{ \Sigma } R  d\tilde\mu_n\to \int_{ \Sigma } R  d \mu,\quad\text{as $n\to\infty$.} 
\end{equation}
 Set for each $k\ge1$
  $$ \hat R_k=\min\{ R, k\}.
  $$
We conclude the proof using Lemma~\ref{lem00} with $h_k=\hat R_k$ and $h=g=R$. 
The conditions (c$_1$) and (c$_2$) are clearly verified, by Lemma~\ref{lem01} and recalling that $R$ is integrable with respect to the measure $\mu$.
\end{proof}

 \begin{lemma}\label{lem02}
For  any continuous $\phi:M\to \R$, we have
$$\lim_{n\to\infty}\int_{\Sigma_n'}\int_0^{R_n(x)}\phi\circ Y^n_t(x) dt d\mu_n(x)
=
 \int_{\Sigma}\int_0^{R(x)}\phi\circ X_t(x) dt d\mu(x). $$
\end{lemma}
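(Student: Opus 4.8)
The strategy is to split the double integral into a main term that converges by Lemma~\ref{lem00} and error terms that vanish because the impulsive perturbation $\varphi_n$ (hence $\psi_n$, $t^\pm_n$, and the discrepancy between $Y^n$ and $X$) is uniformly small. First I would rewrite the $n$-th integral on the section $\tilde\Sigma_n\subset\Sigma$ using the change of variables $\psi_n$ and the relation $R_n=R\circ\psi_n+t^-_n$ from~\eqref{eq.rushes}, so that
\begin{equation*}
\int_{\Sigma_n'}\int_0^{R_n(x)}\phi\circ Y^n_t(x)\,dt\,d\mu_n(x)
=\int_{\tilde\Sigma_n}\int_0^{R(x)+t^-_n(\psi_n^{-1}(x))}\phi\circ Y^n_t(\psi_n^{-1}(x))\,dt\,d\tilde\mu_n(x).
\end{equation*}
Since $t^-_n\circ\psi_n^{-1}$ is uniformly bounded by a constant $c_n\to0$ and $\phi$ is bounded, the contribution of the tail $\int_{R(x)}^{R(x)+t^-_n(\psi_n^{-1}(x))}$ is bounded in absolute value by $c_n\sup|\phi|$ and thus negligible.

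\textbf{Reducing to the unperturbed flow along the trajectory.} The next step is to replace $\phi\circ Y^n_t(\psi_n^{-1}(x))$ by $\phi\circ X_t(x)$ inside the remaining integral $\int_0^{R(x)}$. For $x\in\tilde\Sigma_n$ and $0\le t<R(x)$, the point $\psi_n^{-1}(x)$ lies in $\Sigma_n'$, and by construction of the impulsive Poincaré map its forward $Y^n$-trajectory up to its first return to $\Sigma_n'$ is simply the $X$-trajectory of $\psi_n^{-1}(x)$; moreover $\psi_n^{-1}(x)=X_{t^+_n(x)}(x)$ with $t^+_n$ uniformly small. Hence, for $t$ in a compact range, $Y^n_t(\psi_n^{-1}(x))=X_{t+t^+_n(x)}(x)$, which is uniformly $C^0$-close to $X_t(x)$ because the Lorenz flow is (locally uniformly) continuous and $t^+_n\to0$ uniformly. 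Using uniform continuity of $\phi$ on the compact $M$, one gets
\begin{equation*}
\sup_{x\in\tilde\Sigma_n}\ \sup_{0\le t<R(x)}\bigl|\phi\circ Y^n_t(\psi_n^{-1}(x))-\phi\circ X_t(x)\bigr|\le\omega_n,
\end{equation*}
with $\omega_n\to0$; multiplying by $R(x)$ and integrating, the resulting error is at most $\omega_n\int_\Sigma R\,d\tilde\mu_n$, which stays bounded by Lemma~\ref{lem11} (or directly by the uniform density bounds plus integrability of $R$), hence vanishes.

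\textbf{The main term.} After these reductions it remains to show
\begin{equation*}
\int_{\Sigma} h\,d\tilde\mu_n\to\int_{\Sigma} h\,d\mu,\qquad h(x)=\int_0^{R(x)}\phi\circ X_t(x)\,dt,
\end{equation*}
where $\tilde\mu_n$ is extended by zero outside $\tilde\Sigma_n$. Here $h$ is bounded by $R\sup|\phi|$, so $h\in L^1(\mu)$ and $h\in L^1(\tilde\mu_n)$ for all $n$. To apply Lemma~\ref{lem00} I need a sequence of \emph{continuous} functions $h_k$ on $\Sigma$ converging $\mu$-a.e. to $h$ and dominated by $g:=R\sup|\phi|\in L^1(\mu)$. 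Since $R$ is continuous off the singularity line $\Gamma$ (with $R\approx|\log|x_1||$) and the flow is continuous, $h$ is continuous off $\Gamma$; one then takes $h_k$ to be a continuous truncation/modification of $h$ that agrees with $h$ outside a shrinking neighborhood $\Gamma_{1/k}$ of $\Gamma$, is bounded by $g$, and converges to $h$ pointwise off $\Gamma$ (note $m(\Gamma)=0$ and $\mu(\Gamma)=0$ since $\mu$ has bounded unstable conditionals). Then (c$_1$) and (c$_2$) hold and Lemma~\ref{lem00} gives the convergence of the main term, completing the proof.

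\textbf{Main obstacle.} The delicate point is the passage from $Y^n$ to $X$ along trajectories of length up to $R(x)$: although $t^+_n\to0$ uniformly, the return time $R(x)$ is \emph{unbounded} near $\Gamma$, so one must be careful that the $C^0$ closeness $Y^n_t(\psi_n^{-1}(x))\approx X_t(x)$ is genuinely uniform in $t\in[0,R(x))$ and does not deteriorate as $x\to\Gamma$. This is handled by observing that on each such trajectory segment $Y^n$ coincides with the unperturbed flow $X$ reparametrized by the fixed small time shift $t^+_n(x)$ (no further impulses occur before the return to $\Sigma_n'$), so the discrepancy is controlled purely by the modulus of continuity of $X$ on the compact trapping region together with $\|t^+_n\|_\infty\to0$, uniformly in $x$; the factor $R(x)$ is then absorbed by its $\tilde\mu_n$-integrability with uniform bounds.
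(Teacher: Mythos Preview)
Your outline follows the paper's proof closely: the same change of variables via $\psi_n$, the same negligible tail, and the same decomposition into (i) comparing $\phi\circ Y^n_t\circ\psi_n^{-1}$ with $\phi\circ X_t$ against $\tilde\mu_n$, and (ii) the convergence $\int h\,d\tilde\mu_n\to\int h\,d\mu$ via Lemma~\ref{lem00}. Two small remarks. For (ii) the paper simply takes $h_k(x)=\int_0^{\min(R(x),k)}\phi\circ X_t(x)\,dt$, which is already continuous on $\Sigma$ and dominated by $(\sup|\phi|)R$; this is cleaner than building an ad hoc modification near $\Gamma$. For (i) your direct uniform bound $\omega_n$ valid for all $t\in[0,R(x))$ is actually a simplification of the paper's argument, which first truncates to $[0,\min(R(x),k)]$ by monotone convergence and only then invokes uniform convergence on a compact time interval; your route works because $Y^n_t(\psi_n^{-1}x)=X_{t^+_n(x)}(X_t(x))$ and $\sup_{y\in M}|X_s(y)-y|=O(|s|)$ on the compact trapping region. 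One imprecision: that identity, and your ``no further impulses'' justification, hold only for $t<R(x)-t^+_n(x)$ (the first impulse time for $Y^n$ starting at $\psi_n^{-1}(x)$), not all the way to $R(x)$ as your sup is stated. The leftover sliver $[R(x)-t^+_n(x),R(x))$ has length $t^+_n(x)\to0$ and $\phi$ is bounded, so it is absorbed with the tail; just adjust the claim accordingly.
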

\begin{proof}
We have
\begin{align*}
\int_{\Sigma_n'}\int_0^{R_n(x)}\phi\circ Y^n_t(x) dt d\mu_n(x) &= 
\int_{\Sigma_n'}\int_0^{R\circ\psi_n(x) +t^-_n(x)}\phi\circ Y^n_t(x) dt d(\psi_n^{-1})_*\tilde\mu_n(x)\\
&= 
\int_{\tilde\Sigma_n}\int_0^{R(x) +t^-_n\circ\psi_n^{-1}(x)}\phi\circ Y^n_t\circ \psi_n^{-1}(x) dt d \tilde\mu_n(x)
\end{align*}
Since for any $x\in\tilde\Sigma_n$
\begin{equation*}
 \int_0^{R(x) +t^-_n\circ\psi_n^{-1}(x)}\!\!\!\!\phi\circ Y^n_t\circ \psi_n^{-1}(x) dt  =
 \int_0^{R(x)  }\!\!\!\!\phi\circ Y^n_t\circ   \psi_n^{-1}(x) dt  
+
 \int_0^{ t^-_n\circ\psi_n^{-1}(x)}\!\!\!\!\phi\circ Y^n_t\circ \psi_n^{-1}(x) dt  
\end{equation*}
and    $t^-_n\circ\psi_n^{-1}$ can be made uniformly small when $\varphi_n$ converges to $\inc_\Sigma$ in the $C^0$ topology, we just need to prove that
\begin{equation*}
\int_{ \Sigma }\int_0^{R(x) }\phi\circ Y^n_t\circ \psi_n^{-1}(x) dt d \tilde\mu_n(x)
\to
\int_{ \Sigma }\int_0^{R(x)  }\phi\circ X_t  (x) dt d  \mu (x), \quad\text{when $n\to\infty$}.
\end{equation*}
Note that we may assume that $\tilde\mu_n$ is  a measure on $\Sigma$.
We have
\begin{align}
\int_{ \Sigma }\int_0^{R(x) }&\phi\circ Y^n_t\circ \psi_n^{-1}(x) dt d \tilde\mu_n(x)
-
\int_{ \Sigma }\int_0^{R(x)  }\phi\circ X_t  (x) dt d  \mu (x)\nonumber\\
 = &
\int_{ \Sigma }\int_0^{R(x) } (\phi\circ Y^n_t\circ \psi_n^{-1}(x) - \phi\circ X_t  (x) )dt d \tilde\mu_n(x)\label{eq.first}\\
&+\int_{ \Sigma }\int_0^{R(x)  }\phi\circ X_t  (x) dt d  \tilde\mu_n (x)-\int_{ \Sigma }\int_0^{R(x)  }\phi\circ X_t  (x) dt d  \mu (x).\label{eq.last}
\end{align}
First we prove that the difference in \eqref{eq.last} converges to 0 when $n\to\infty$. Indeed, set
$$
h(x)=\int_0^{R(x)  }\phi\circ X_t  (x) dt  
  $$
and for $k\ge1$,
  $$\hat R_k(x)=\min\{ R(x), k\}\qand h_k(x)=\int_0^{\hat R_k(x)  }\phi\circ X_t  (x) dt  .
  $$
  We have
 $$|h_k(x)|=\left| \int_0^{\hat R_k(x)  }\phi\circ X_t  (x) dt \right| \le  \int_0^{\hat R_k(x)  }\left|\phi\circ X_t  (x)  \right| dt \le C\hat R_k(x)\le CR(x).
 $$
We conclude the proof using Lemma~\ref{lem00} with     $g=CR$. 
The conditions (c$_1$) and (c$_2$) are clearly verified, by Lemma~\ref{lem01} and recalling that $R$ is integrable with respect to $\mu$.

We are left to prove that the term in \eqref{eq.first} converges to 0 when $n\to\infty$. For simplicity, consider 
$$
g(t,x)=\phi\circ X_t  (x) \qand g_n(t,x)=\phi\circ Y^n_t\circ \psi_n^{-1}(x), \quad\forall n\ge1.
$$
It is enough to show that 
$$
\int_{ \Sigma }\int_0^{R(x) } |g_n(t,x) - g  (t,x) |dt d \tilde\mu_n(x)\to 0
$$
when $n\to\infty$.
Since $\hat R_k(x)\to R(x)$, when $k\to\infty$, for $\tilde\mu_n$-almost every $x$, by monotone convergence theorem
 \begin{align*}
\int_{ \Sigma }\int_0^{R(x) } |g_n(t,x) - g  (t,x) |dt d \tilde\mu_n(x)
&=\int_{ \Sigma }\lim_{k\to\infty} \int_0^{\hat R_k(x) } |g_n(t,x) - g  (t,x) |dt d \tilde\mu_n(x)\nonumber\\
&=\lim_{k\to\infty} \int_{ \Sigma } \int_0^{\hat R_k(x) } |g_n(t,x) - g  (t,x) |dt d \tilde\mu_n(x). \label{eq.wael}
\end{align*}
Since this last expression defines an increasing sequence in $k$ of nonnegative numbers, we just need to show that each term in that sequence can be made arbitrarily small (for~$n$ sufficiently large).
So, fix  an arbitrary    $\varepsilon>0$.  Given any   $k\in\N$,   there exists $n_\varepsilon\in \N $ (depending only on $k$) such that
for all   $x\in \Sigma$  and $0\le t\le \hat R_k(x)$ (recall that $\hat R_k(x)\le k$), we have 
\begin{equation*}
n\ge n_\varepsilon \implies |g_n(t,x)- g (t,x)|<\varepsilon.
\end{equation*}
Therefore,   for any $n\ge n_\varepsilon$ we have
\begin{align*}
\int_{ \Sigma } \int_0^{\hat R_k(x) } |g_n(t,x) - g  (t,x) |dt d \tilde\mu_n(x) &\le \varepsilon\int_\Sigma \hat R_k(x)d \tilde\mu_n(x) \le \varepsilon\int_\Sigma R (x)d \tilde\mu_n(x).
\end{align*}
Now, by~\eqref{eq.me}, we may choose a uniform constant $C>0$ such that $\int_\Sigma R (x)d \tilde\mu_n(x)\le C$, for all $n\in\N$. This finishes the proof.
\end{proof}
\subsection{Entropy stability}

In this subsection we obtain  the entropy stability part of Theorem~\ref{th.stability}. 
%
%
%
Similar to \eqref{eq.abramov}, we have by Abramov formula 
\begin{equation}\label{eq.abramovy}
h_{\nu_n}(Y_n)=\frac{h_{\mu_n}(F_n)}{\int_{\Sigma'_n} R_n d\mu_n } .
\end{equation}
%
%
%
%
By \eqref{eq.abramov} and \eqref{eq.abramovy}, it is enough   to  prove that 
\begin{equation*}
\frac{h_{\mu_n}(F_n)}{\int_{\Sigma'_n} R_n d\mu_n }\longrightarrow  \frac{h_{\mu}(F)}{\int_{\Sigma} R d\mu },
\quad\text{as $n\to\infty$} .
\end{equation*}
Since $(F_n,\mu_n)$ and $(\tilde F_n,\tilde\mu_n)$ are isomorphic dynamical systems, by Lemma~\ref{lem11}   we just need to prove that
\begin{equation}\label{eq.goal}
 h_{\tilde\mu_n}(\tilde F_n) \to   h_{\mu}(F),
\quad\text{as $n\to\infty$} .
\end{equation}
It follows from Theorem~\ref{th.finite},  Theorem~\ref{th.L->H}  and Lemma~\Ref{le.L1to5} that the entropy formula holds for both  dynamical systems $(\tilde F_n,\tilde\mu_n)$ and $( F , \mu )$. Therefore,  the  convergence in~\eqref{eq.goal} can be rephrased as
\begin{equation}\label{eq.newgoal}
 \int_\Sigma \log |\tilde J^u_n| d\tilde\mu_n \to  \int_\Sigma \log|J^u|  d\mu ,
\quad\text{as $n\to\infty$} ,
\end{equation}
where $J_n^u$ and $J^u$ stand for the unstable Jacobians of $\tilde F_n$ and $F$, respectively.

\begin{lemma}\label{le.entropy}
$\displaystyle\lim_{n\to\infty}\int_{\Sigma}\log |\tilde J^u_n| d\tilde\mu_n= \int_{\Sigma}  \log|J^u|  d\mu $.
\end{lemma}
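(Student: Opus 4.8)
The plan is to prove the convergence $\int_\Sigma \log|\tilde J^u_n|\,d\tilde\mu_n \to \int_\Sigma \log|J^u|\,d\mu$ by combining the weak* convergence $\tilde\mu_n \stackrel{w*}\to \mu$ from~\eqref{eq.jania} with a suitable control on the unstable Jacobians. The first step is to understand the structure of $\log|\tilde J^u_n|$: by~\eqref{eq.hadi} we have $\tilde F_n = \psi_n\circ\varphi_n\circ F$, so the unstable Jacobian $\tilde J^u_n$ factors (up to the choice of unstable directions) as a product of the unstable Jacobian of $F$ with a contribution from $\psi_n\circ\varphi_n$. Since $\varphi_n\to\inc_\Sigma$ in the $C^1$ topology and $\psi_n$ is $C^1$-close to $\inc_{\Sigma'}$, the Jacobian factor of $\psi_n\circ\varphi_n$ is uniformly close to $1$ on the (compact) domain, and in particular $\log|\tilde J^u_n| = \log|J^u| + e_n$ where $e_n$ is a bounded function converging uniformly to $0$ on any region bounded away from $\Gamma$. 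The complication, exactly as in the analysis of $R$, is that $\log|J^u|$ is unbounded near the singularity line $\Gamma$: by (L4) the expansion rate of $F$ behaves like $|x|^{-\alpha}$ near $\Gamma$, so $\log|J^u(x)| \approx \log\frac1{|x|}$, which is integrable but not bounded.

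The second step is therefore to set up a truncation argument mirroring Lemma~\ref{lem00} and its uses in Lemma~\ref{lem11} and Lemma~\ref{lem02}. Write $\log|\tilde J^u_n| = \log|J^u| + e_n$ and split
\begin{equation*}
\int_\Sigma \log|\tilde J^u_n|\,d\tilde\mu_n - \int_\Sigma \log|J^u|\,d\mu = \int_\Sigma e_n\,d\tilde\mu_n + \left(\int_\Sigma \log|J^u|\,d\tilde\mu_n - \int_\Sigma \log|J^u|\,d\mu\right).
\end{equation*}
For the second parenthesized term, apply Lemma~\ref{lem00} with $h = g = \log|J^u|$ (which is in $L^1(\mu)$ by item (4) of Theorem~\ref{th.finite} together with the $|x|^{-\alpha}$ estimate, and in $L^1(\tilde\mu_n)$ uniformly by Remark~\ref{re.uniform}, since the conditional densities on unstable manifolds are uniformly bounded above and below) and $h_k = \min\{\log|J^u|, k\}$ — or better $h_k$ a continuous truncation — verifying (c$_1$) and (c$_2$) exactly as in the earlier lemmas. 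This forces that term to $0$. For the first term $\int_\Sigma e_n\,d\tilde\mu_n$, one must be slightly careful because $e_n$, while small, may still carry the $|x|^{-\alpha}$-type growth of $\log|J^u_n|$ near $\Gamma$; the cleanest route is to dominate $|e_n| \le \delta_n(1 + \log|J^u|)$ or $|e_n| \le \delta_n \cdot C(1 + |\log|J^u||)$ for a sequence $\delta_n\to 0$ coming from the $C^1$-closeness of $\varphi_n, \psi_n$ to the respective inclusions, and then conclude $\int_\Sigma |e_n|\,d\tilde\mu_n \le \delta_n\,C'$, using again the uniform bound $\int_\Sigma |\log|J^u||\,d\tilde\mu_n \le C'$ from the uniform conditional-density estimates (the same mechanism used at the end of Lemma~\ref{lem02} to bound $\int_\Sigma R\,d\tilde\mu_n$).

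The main obstacle I expect is the bookkeeping around the singularity set: one needs to check that the $C^1$-perturbation estimate $|e_n(x)| \le \delta_n(1+|\log|J^u(x)||)$ genuinely holds uniformly up to (but not on) $\Gamma$, i.e. that composing $F$ with the near-identity $C^2$ maps $\psi_n\circ\varphi_n$ perturbs $\log$ of the unstable Jacobian by a multiplicatively small (not merely additively bounded) amount even where the Jacobian itself blows up. This follows from the chain rule — the perturbation enters as $\log|J^u_n| = \log|J^u| + \log|(\text{Jacobian of }\psi_n\circ\varphi_n\text{ along the image unstable direction})|$, and the latter factor is $C^0$-close to $1$ on all of the image of $F$ by (L3)/(L4)-type compactness of the relevant directions — but it deserves to be stated carefully, perhaps as a preliminary observation, since it is exactly the point where the unboundedness of $\log|J^u|$ and the discontinuity of the maps interact. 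Once that estimate is in hand, the dominated-convergence / truncation machinery of Lemma~\ref{lem00} delivers the result and hence~\eqref{eq.newgoal}, completing the proof of entropy stability.
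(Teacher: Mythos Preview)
Your decomposition into $\int_\Sigma e_n\,d\tilde\mu_n$ and $\int_\Sigma\log|J^u|\,d\tilde\mu_n - \int_\Sigma\log|J^u|\,d\mu$ is exactly the paper's. For the second term you invoke Lemma~\ref{lem00} with truncations $h_k=\min\{\log|J^u|,k\}$; the paper simply appeals to weak* convergence on each $D_j$ where $\log|J^u|$ is continuous --- which, since $\log|J^u|$ is unbounded near $\Gamma$, implicitly needs precisely the uniform-integrability reasoning you spell out. So on that term you are, if anything, more careful.

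The genuine difference is in the first term. You write the chain-rule identity $\log|\tilde J^u_n|=\log|J^u|+\log|\text{Jacobian of }\psi_n\circ\varphi_n\text{ along the image unstable direction}|$ and argue the second summand is uniformly small. But this identity is not correct as written: the unstable direction $E^u_{\tilde F_n}(x)$ need not coincide with $E^u_F(x)$, so the chain rule yields $|d_xF|_{E^u_{\tilde F_n}(x)}|$ rather than $|J^u(x)|=|d_xF|_{E^u_F(x)}|$. You flag this (``up to the choice of unstable directions'') but do not resolve it, and near $\Gamma$, where $d_xF$ is highly anisotropic, the discrepancy between these two restricted Jacobians is exactly the delicate point. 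The paper sidesteps the issue entirely: it uses Hadamard's inequality together with (H1) to dominate $|\log|\tilde J^u_n(x)||$ uniformly in $n$ by $\log A+\alpha|\log\rho(x,\Gamma)|$, obtains pointwise a.e.\ convergence $\log|\tilde J^u_n|\to\log|J^u|$ directly from (S5), replaces $\tilde\mu_n$ by Lebesgue measure via the uniform conditional-density bounds of Remark~\ref{re.uniform}, and applies dominated convergence. This route never needs to track how the unstable direction moves with $n$ and is considerably shorter than the multiplicative estimate you are aiming for.
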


\begin{proof} 
We have
\begin{align}
\int_{\Sigma}\log |\tilde J^u_n| d\tilde\mu_n - &\int_{\Sigma}  \log|J^u| d\mu\nonumber\\
 & =
\int_{\Sigma}\left(\log |\tilde J^u_n|  -  \log|J^u| \right)d\tilde\mu_n +
 \int_{\Sigma}  \log|J^u| d\tilde\mu_n- \int_{\Sigma}  \log|J^u| d\mu .\label{eq.difference}
\end{align}
We first prove that the integral of the difference above converges to zero when $n\to\infty$. 
By Hadamard's inequality and (H1), we have for all $x$
\begin{equation}\label{eq.Hadamard}
|\tilde J^u_n(x)|\le \| d_x F_n\|\le  A \rho(x,\Gamma)^{-\alpha};
\end{equation}
recall that the constant $A$ may be chosen uniform, by Lemma~\ref{le.S1to5}.
Since the densities of the conditionals of each $\tilde\mu_n$ with respect to Lebesgue measure   on local unstable manifolds are  bounded from above and below by positive constants independent of $n$ (recall Remark~\ref{re.uniform}), we may assume that the conditionals of  $\tilde\mu_n$ are those of Lebesgue measure.
Moreover, it follows from (S5)   that $\log |\tilde J^u_n|$ converges pointwise to $\log |\tilde J^u|$, Lebesgue almost everywhere. Since $\log \rho(x,\Gamma)^{-\alpha}$ is integrable with respect to Lebesgue measure, using \eqref{eq.Hadamard} and dominated convergence theorem we conclude that the integral of the difference in~\eqref{eq.difference} converges to zero when $n\to\infty$.

We are left to show that the difference of the integrals in~\eqref{eq.difference} goes to zero when~${n\to\infty}$. Decomposing both integrals into    integrals on $D_1$ and $D_2$, and noticing that $\log |  J^u | $ is continuous on both $D_1$ and $D_2$, the conclusion follows by the weak* convergence of $\tilde\mu_n$ to~$\mu$ given by~\eqref{eq.jania}.
\end{proof}


\end{document}